\documentclass[11pt]{article}

\usepackage[margin=1in]{geometry}
\usepackage{amsmath,amssymb,amsthm,mathtools}
\usepackage{enumitem}
\usepackage[hidelinks]{hyperref}
\usepackage{microtype}
\usepackage{tikz}

\hypersetup{
  pdftitle={Extremal Graphs for the Energy--Independence Number Inequality},
  pdfauthor={Seyed Ahmad Mojallal},
  pdfsubject={Spectral graph theory and extremal graph energy},
  pdfkeywords={graph energy, independence number, vertex-cover number,
    extremal graph, equality characterization}
}

\newtheorem{theorem}{Theorem}[section]
\newtheorem{lemma}[theorem]{Lemma}
\newtheorem{proposition}[theorem]{Proposition}
\newtheorem{corollary}[theorem]{Corollary}
\theoremstyle{definition}
\newtheorem{definition}[theorem]{Definition}
\theoremstyle{remark}
\newtheorem{remark}[theorem]{Remark}

\newcommand{\E}{\mathcal{E}}
\newcommand{\one}{\mathbf{1}}
\newcommand{\Spec}{\operatorname{Spec}}
\newcommand{\tr}{\operatorname{tr}}

\title{Extremal Graphs for the Energy--Independence Number Inequality}

\author{
Seyed Ahmad Mojallal%
\thanks{Department of Mathematics, Simon Fraser University,
Burnaby, BC V5A 1S6, Canada.
Email: \texttt{ahmad\_mojalal@yahoo.com}.}
}

\date{}

\begin{document}

\maketitle

\begin{abstract}
For a graph \(G\) of order \(n\), let \(\mathcal E(G)\) denote its adjacency
energy and let \(\alpha(G)\) denote its independence number.  A recent theorem
of Kumar and Pragada states that
\[
   \mathcal E(G)\ge 2\bigl(n-\alpha(G)\bigr).
\]
We determine all graphs attaining equality.  More precisely, equality holds
if and only if every connected component of \(G\) is an isolated vertex, a
balanced complete multipartite graph, or a graph obtained by taking the disjoint union of
\(K_{a,\ldots,a}\) and \(K_{b,\ldots,b}\), with the same number \(r\ge3\) of
parts, and then completely joining corresponding parts.
\end{abstract}

\smallskip
\noindent\textbf{Keywords.}
graph energy, independence number, vertex-cover number, extremal graph

\medskip
\noindent\textbf{2020 Mathematics Subject Classification.}
05C50, 05C35.

\section{Introduction}

Throughout the paper, all graphs are finite, simple, and undirected.  For a
graph $G$, we write $V(G)$ and $E(G)$ for its vertex and edge sets,
$n=|V(G)|$ for its order, and $\alpha(G)$ for its independence number.  For
$v\in V(G)$, let $N(v)$ and $N[v]:=N(v)\cup\{v\}$ denote its open and closed
neighborhoods, respectively.  The vertex-cover number is
$\tau(G)=n-\alpha(G)$.
A complete multipartite graph with part sizes $a_1,\ldots,a_r$ is denoted by
$K_{a_1,\ldots,a_r}$.  It is called \emph{balanced} when all part sizes are
equal, in which case we write $K_{a,a,\ldots,a}$.  A graph is
\emph{well-covered} if every maximal independent set has cardinality
$\alpha(G)$.

Let \(A(G)\) be the adjacency matrix of \(G\), with eigenvalues
\(\lambda_1,\ldots,\lambda_n\). The adjacency energy of \(G\) is defined by
\[
   \E(G):=\sum_{i=1}^n |\lambda_i|.
\]
This spectral invariant was introduced by Gutman in 1978, motivated by its
connection with the total \(\pi\)-electron energy of conjugated hydrocarbon
molecules; see \cite{Gutman1978}.

The inequality studied in this paper has a history spanning several decades.
Using the automated conjecture-making program Graffiti, Fajtlowicz proposed
in the 1980s that
\begin{equation}\label{eq:KP-main}
   \E(G)\ge 2\bigl(n-\alpha(G)\bigr)=2\tau(G)
\end{equation}
for every graph \(G\). An early account of the Graffiti project appeared in
\cite{Fajtlowicz1987}, and the conjecture was subsequently recorded as
Conjecture~543 in the survey of Aouchiche and Hansen
\cite[Table~6]{AouchicheHansen2010}.

Several partial results preceded the complete proof. 
Wang and Ma proved the lower bound
\[
   \E(G)\ge 2\bigl(\tau(G)-c(G)\bigr),
\]
where \(c(G)\) denotes the number of odd cycles in their formulation
\cite{WangMa2017}. Liu and Ning later included the conjecture in their survey
of unsolved problems in spectral graph theory and reported computational
verification for graphs of order at most ten \cite{LiuNing2023}.

Further progress was obtained in 2025. Akbari, K\"u\c{c}\"uk\c{c}if\c{c}i,
Saveh, and Yaz{\i}c{\i} established stronger vertex-cover bounds and verified
\eqref{eq:KP-main} for several graph classes
\cite{AkbariEtAl2025}. Samanta proved the inequality for additional families
of graphs \cite{Samanta2025}. Abiad, Coutinho, Juliano, and Reijnders
introduced a semidefinite-programming formulation of graph energy and derived
several substantial partial results toward the conjecture
\cite{AbiadEtAl2025}.

Finally, Kumar and Pragada proved \eqref{eq:KP-main} for every graph in 2026
\cite{KumarPragada}.  The purpose of the present paper is to determine all
graphs for which equality holds.

Equality has previously been studied for related energy bounds.  Wang and Ma
characterized equality in their lower bound involving the number of odd
cycles \cite{WangMa2017}, while Chen and Liu characterized the graphs whose
energy is twice their matching number \cite{ChenLiu2021}.  These results do
not determine equality in \eqref{eq:KP-main}.  Abiad, Coutinho, Juliano, and
Reijnders observed that the known equality examples for
\eqref{eq:KP-main} include balanced complete multipartite graphs and the
Cartesian products $K_2\square K_r$ \cite{AbiadEtAl2025}.  To the best of our
knowledge, no complete characterization of the equality graphs has previously
been obtained.

We next define the second extremal family appearing in our characterization.
It extends the previously known examples $K_2\square K_r$.

\begin{definition}\label{def:Hrab}
Let $r\ge 2$ and let $a,b\ge 1$ be integers.  The graph $H_r(a,b)$
has a vertex partition
\[
  X_1\dot\cup\cdots\dot\cup X_r\dot\cup
  Y_1\dot\cup\cdots\dot\cup Y_r,
\]
where $|X_i|=a$ and $|Y_i|=b$ for every $i$.  Each $X_i$ and each
$Y_i$ is an independent set, and the adjacencies are defined as follows:
\begin{enumerate}[label=\textup{(\roman*)}]
\item $X_i$ is completely joined to $X_j$ whenever $i\ne j$;
\item $Y_i$ is completely joined to $Y_j$ whenever $i\ne j$;
\item $X_i$ is completely joined to $Y_i$ for every $i$;
\item there are no edges between $X_i$ and $Y_j$ whenever $i\ne j$.
\end{enumerate}
\end{definition}

\begin{remark}\label{rem:Hrab-blowup}
We have $H_2(a,b)\cong K_{a+b,a+b}$.
Thus, the members of this family that are not already balanced complete
multipartite graphs occur when $r\ge 3$.

For graphs $G_1$ and $G_2$, their Cartesian product
$G_1\square G_2$ has vertex set $V(G_1)\times V(G_2)$, where
$(u,x)$ and $(v,y)$ are adjacent if either $u=v$ and
$xy\in E(G_2)$, or $x=y$ and $uv\in E(G_1)$.

The graph $H_r(a,b)$ may be viewed as a two-weight independent blow-up
of $K_2\square K_r$.  More precisely, each vertex in one copy of $K_r$
is replaced by an independent set of size $a$, each vertex in the other
copy is replaced by an independent set of size $b$, and every edge is
replaced by all possible edges between the corresponding independent
sets.  In particular,
\[
   H_r(1,1)\cong K_2\square K_r.
\]
When $a=b$, the graph $H_r(a,a)$ is the uniform independent blow-up of
$K_2\square K_r$, obtained by replacing every vertex with an independent
set of size $a$.  For $a\ne b$, the two copies of $K_r$ are replaced
using different blow-up factors.
\end{remark}

\begin{theorem}[Equality characterization]\label{thm:main}
Let $G$ be a graph of order $n$.  Then
\[
   \E(G)=2\bigl(n-\alpha(G)\bigr)
\]
if and only if every connected component of $G$ is one of the following:
\begin{enumerate}[label=\textup{(\arabic*)}]
\item an isolated vertex;
\item a balanced complete multipartite graph
      $K_{a,a,\ldots,a}$ with at least two parts;
\item a graph $H_r(a,b)$ with $r\ge 3$ and $a,b\ge 1$.
\end{enumerate}
\end{theorem}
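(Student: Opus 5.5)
The plan is to reduce to connected graphs, verify sufficiency by explicit spectral computation, and derive necessity by forcing equality in every step of a proof of \eqref{eq:KP-main}. Both $\E$ and $\alpha$ are additive over connected components, and by \eqref{eq:KP-main} each component satisfies $\E(G_i)\ge 2(n_i-\alpha(G_i))$. Hence equality for $G$ holds if and only if it holds for every component, and it suffices to treat connected $G$. An isolated vertex trivially attains equality.

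\textbf{Sufficiency.} For $K_{a,\ldots,a}$ with $r\ge2$ parts, the spectrum is $a(r-1)$, then $-a$ with multiplicity $r-1$, then $0$. So $\E=2a(r-1)=2(ra-a)=2(n-\alpha)$.

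For $H_r(a,b)$, all nonzero eigenvalues come from the $2r\times 2r$ quotient matrix
\[
Q=\begin{pmatrix} a(J-I) & bI\\ aI & b(J-I)\end{pmatrix},
\]
since vectors summing to zero on each $X_i$ and each $Y_i$ lie in the kernel. I will split $Q$ into the $\one$-direction and its orthogonal complement in $\mathbb{R}^r$. This yields the two $2\times2$ blocks
\[
\begin{pmatrix} a(r-1)& b\\ a & b(r-1)\end{pmatrix}
\quad\text{and}\quad
\begin{pmatrix} -a& b\\ a & -b\end{pmatrix},
\]
the second with multiplicity $r-1$.
\begin{itemize}
\item The first block has determinant $ab\,r(r-2)\ge 0$ and positive trace, so both eigenvalues are nonnegative. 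Its contribution to the energy is $(a+b)(r-1)$.
\item The second block has eigenvalues $0$ and $-(a+b)$. Its contribution is $(r-1)(a+b)$.
\end{itemize}
So $\E=2(r-1)(a+b)$. Independent sets are $X_i\cup Y_j$ with $i\ne j$, or subsets of a single $X_i$ or $Y_i$, so $\alpha=a+b$ for $r\ge3$. This gives $n-\alpha=(r-1)(a+b)$, as required.

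\textbf{Necessity.} I would use the dual description $\E(G)=\max\{\tr(AX): X=X^{T},\ \|X\|_{\mathrm{op}}\le1\}$, together with Cauchy interlacing, which gives $\E(G)\ge\E(G[S])$ for every induced subgraph. The steps are:
\begin{enumerate}
\item Show that equality is hereditary in a controlled way. If $v$ lies in no maximum independent set, then $\alpha(G-v)=\alpha(G)$, and $\E(G-v)\ge 2(n-1-\alpha)$ has slack. I expect a strict inequality $\E(G)>\E(G-v)+2$ for a non-isolated such $v$, with the deficit coming from the rank-one change of the certificate. This would force every vertex of an equality graph to lie in a maximum independent set.
\item Push the argument further, using that the positive and negative parts of the spectrum each contribute exactly $n-\alpha$. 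I would aim to show that nonadjacent vertices in a common maximum independent set must be false twins, i.e.\ $N(u)=N(v)$.
\item Contract the false-twin classes to obtain a twin-free connected quotient $F$ with class weights. The weighted quotient matrix then has to satisfy the same equality.
\item Classify twin-free $F$ satisfying the equality constraints. I expect exactly two outcomes: $F=K_r$, giving $K_{a,\ldots,a}$, and $F=K_2\square K_r$, giving $H_r(a,b)$. The requirement that weights be constant on each clique, and the balance condition on the parts, should come from the $2\times2$ block computation above, where equality forces $\det\ge0$ only in the balanced configurations.
\end{enumerate}

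\textbf{Main obstacle.} I expect the hardest step to be Step~4, the classification of twin-free quotients, together with the twin-forcing in Step~2, since both need a precise handle on when the inequality of \cite{KumarPragada} is tight. My first attempt would be to trace the Kumar--Pragada certificate and record each inequality that must become an equality. I would then use small forbidden induced subgraphs with strict inequality, such as $P_4$, and the odd cycles $C_5$ and $C_7$, together with induced-subgraph monotonicity, to restrict $F$ to $K_r$ or $K_2\square K_r$.
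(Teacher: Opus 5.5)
\textbf{Necessity has a genuine gap.} Your reduction to connected components is correct and matches the paper. So are your sufficiency computations: the spectrum of $K_{a,\ldots,a}$, the two $2\times2$ blocks of the quotient matrix of $H_r(a,b)$, and $\alpha(H_r(a,b))=a+b$. The necessity direction, however, is the real content of the theorem, and it is not established. Steps 1--4 are stated as expectations, and as formulated they fail.
\begin{enumerate}[label=\textup{(\roman*)}]
\item \emph{Step~2 is false.} In the equality graph $H_r(a,b)$, vertices $x\in X_1$ and $y\in Y_2$ are nonadjacent and lie in the maximum independent set $X_1\cup Y_2$. Yet $X_3\subseteq N(x)\setminus N(y)$, so they are not false twins. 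This also contradicts your own Step~4, since $K_2\square K_r$ is twin-free with $\alpha=2$.
\item \emph{The forbidden-induced-subgraph strategy of Step~4 cannot work.} Equality in \eqref{eq:KP-main} is not inherited by induced subgraphs. The bound $\E(G)\ge\E(G[S])>2\tau(G[S])$ says nothing about $\E(G)-2\tau(G)$ when $\tau(G)>\tau(G[S])$. Concretely, $x_1x_2y_2y_3$ is an induced $P_4$ in the equality graph $K_2\square K_3=H_3(1,1)$, although $\E(P_4)=2\sqrt5>4$.
\item \emph{The inequality in Step~1 is false in general.} The hoped-for bound $\E(G)>\E(G-v)+2$ can fail for a non-isolated vertex lying in no maximum independent set. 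Take $K_{3,3}$ with a pendant vertex attached to $u_1$, and $v=u_2$. Then $\E(G)=2\sqrt{10+2\sqrt6}$ and $\E(G-v)=2\sqrt{7+2\sqrt3}$, a difference of about $1.25$. So this step must use the equality hypothesis, and you give no mechanism for doing so.
\end{enumerate}

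\textbf{The missing idea is how tightness of the Kumar--Pragada argument is turned into structure.} The paper sums \eqref{eq:neighborhood-deletion} over all vertices and compares the result with \eqref{eq:KP-main} applied to every $G-N[v]$. This forces two things:
\begin{enumerate}[label=\textup{(\alph*)}]
\item equality in every anti-neighborhood, which is the only heredity available and which also yields well-coveredness;
\item equality in every local $2\times2$ positive-semidefinite estimate.
\end{enumerate}
Consequently $B=|A|$ has constant diagonal $2\tau/n$ and $|B_{uv}|=2\tau/n-1$ on every edge. Taking Gram vectors $p_v,q_v$ for $A_+,A_-$, each edge forces $p_u=p_v$ or $q_u=-q_v$; the case $\alpha=n/2$ directly yields $K_{n/2,n/2}$. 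The twin classes you wanted in Step~2 appear as cells of vertices sharing both Gram vectors, not as vertices sharing a maximum independent set. The classification you left open in Step~4 then follows from three ingredients:
\begin{enumerate}[label=\textup{(\alph*)}]
\setcounter{enumi}{2}
\item the Gram matrix $I-\frac{\alpha}{n}J$ of the cell vectors in a $P$-class, which bounds the number of cells;
\item LP duality for weighted bipartite matchings on the cell incidence graph, which shows each $P$-class has exactly $n/\alpha$ cells of equal size;
\item the vanishing sum of the cell vectors, which forces adjacent $P$-classes to be perfectly antipodally matched and leaves at most two $P$-classes.
\end{enumerate}
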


The proof combines spectral, geometric, and combinatorial ideas.  Equality in
the neighborhood-deletion inequality first propagates to every
anti-neighborhood and implies that every equality graph is well-covered.
Equality in the local positive-semidefinite estimates then shows that every
edge satisfies one of two precise relations among the associated Gram vectors.
These relations lead naturally to a partition of the vertex set into
$P$-classes and cells.  The adjacencies among the cells are encoded by a
weighted bipartite incidence graph: the graph whose vertices are the cells is
the line graph of this bipartite graph, with cell cardinalities serving as
weights.

The final step uses the standard equality between the maximum weight of a
matching in a bipartite graph and the maximum weight of a fractional matching.
A uniform fractional matching is optimal, and comparison with an optimal
solution of the associated dual problem shows that every $P$-class contains
exactly $n/\alpha(G)$ cells, all of the same cardinality.  These conclusions
imply that there are at most two $P$-classes and yield the two connected
equality families.

The paper is organized as follows.  Section~2 recalls the spectral
decomposition and proves that equality propagates to anti-neighborhoods.
Section~3 derives rigidity of the absolute adjacency matrix.  Section~4 treats
the boundary case $\alpha(G)=n/2$.  Section~5 handles the strict case
$\alpha(G)<n/2$: it establishes the two possible edge types, introduces $P$-classes
and cells, constructs the weighted cell incidence graph, and applies the
weighted matching linear program to show that there are at most two
$P$-classes.  Section~6 identifies the resulting connected graphs.
Section~7 verifies that the listed families attain equality, using the
standard spectrum of balanced complete multipartite graphs and a direct
calculation for $H_r(a,b)$.  Section~8 proves the main theorem, including the
disconnected case, and Section~9 records several consequences.

\section{Preliminaries and equality propagation}
In this section, we collect the spectral notation and the basic identities
needed throughout the proof.  We then revisit the neighborhood-deletion
argument of Kumar and Pragada and determine its first consequences in the
equality case.  In particular, we show that equality is inherited by every
anti-neighborhood and deduce that every equality graph is well-covered.

For a real symmetric matrix $M$, the notation $M\succeq0$ means that $M$ is
positive semidefinite.  Equivalently, $M$ is a Gram matrix: there exist
Euclidean vectors $z_1,\ldots,z_n$ such that
$M_{ij}=\langle z_i,z_j\rangle$.  We write $I_k$ and $J_k$ for the identity
and all-ones matrices of order $k$, respectively, and $\one$ for an all-ones
vector of the dimension indicated by the context.

Fix a graph $G$, and let $A=A(G)$.  Write
\begin{equation}\label{eq:spectral-parts}
   A=P-Q,
   \qquad P=A_+\succeq 0,
   \qquad Q=A_-\succeq 0,
   \qquad PQ=0,
\end{equation}
where $A_+$ and $A_-$ are the positive and negative spectral parts of $A$.
Set
\[
   B:=P+Q=|A|.
\]
Then $B^2=A^2$.  Since $A$ has zero diagonal,
\begin{equation}\label{eq:diagPQ}
   P_{vv}=Q_{vv}=\frac12 B_{vv}
   \qquad(v\in V(G)).
\end{equation}
Moreover,
\begin{equation}\label{eq:traceP}
   \tr P=\tr Q=\frac12\E(G).
\end{equation}

For $v\in V(G)$, write
\[
   G_v:=G-N[v].
\]
The key inequality of Kumar and Pragada is
\begin{equation}\label{eq:neighborhood-deletion}
   4|E(G)|+\sum_{v\in V(G)}\E(G_v)
   \le |V(G)|\E(G).
\end{equation}
See \cite[Lemma 2.2]{KumarPragada}.

\begin{proposition}[Equality propagates to anti-neighborhoods]
\label{prop:propagation}
Suppose that
\[
   \E(G)=2\bigl(n-\alpha(G)\bigr).
\]
Then, for every $v\in V(G)$,
\begin{align}
   \alpha(G_v)&=\alpha(G)-1,\label{eq:alpha-propagation}\\
   \E(G_v)&=2\bigl(|V(G_v)|-\alpha(G_v)\bigr).
   \label{eq:energy-propagation}
\end{align}
In particular, every equality graph is well-covered.
\end{proposition}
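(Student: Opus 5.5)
We sandwich the neighborhood-deletion inequality \eqref{eq:neighborhood-deletion} between the Kumar--Pragada bound \eqref{eq:KP-main}, applied to each anti-neighborhood $G_v$, and the equality hypothesis for $G$. Write $n_v:=|V(G_v)|=n-1-d(v)$, where $d(v)$ is the degree of $v$.

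\textbf{Step 1 (lower bound on each term).} For every $v$, apply \eqref{eq:KP-main} to $G_v$. Adding $v$ to any independent set of $G_v$ gives an independent set of $G$, so $\alpha(G_v)\le\alpha(G)-1$. Hence
\[
  \E(G_v)\ge 2\bigl(n_v-\alpha(G_v)\bigr)\ge 2\bigl(n-1-d(v)-\alpha(G)+1\bigr)=2\bigl(n-d(v)-\alpha(G)\bigr).
\]
If $G_v$ is empty, then $\alpha(G_v)=0$ and $\E(G_v)=0$. The same two inequalities still hold in this case, because $\alpha(G)=1$ when $N[v]=V(G)$ and $v$ lies in a maximum independent set.

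\textbf{Step 2 (summation).} Sum over $v$ and use $\sum_v d(v)=2|E(G)|$:
\[
  \sum_v\E(G_v)\ge 2n\bigl(n-\alpha(G)\bigr)-4|E(G)|.
\]
Combining this with \eqref{eq:neighborhood-deletion} and the equality hypothesis gives
\[
  4|E(G)|+2n\bigl(n-\alpha(G)\bigr)-4|E(G)|\le 4|E(G)|+\sum_v\E(G_v)\le n\,\E(G)=2n\bigl(n-\alpha(G)\bigr).
\]
The two outer quantities coincide, so every inequality in the chain is an equality. Since each summand in Step 1 satisfies the same termwise inequality, equality of the sums forces equality in both inequalities of Step 1 for every $v$. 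The first equality is \eqref{eq:energy-propagation}, and the second is \eqref{eq:alpha-propagation}.

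\textbf{Step 3 (well-coveredness).} We show by induction on $n$ that every equality graph is well-covered. Let $S$ be a maximal independent set of $G$ and pick $v\in S$. Then $S\setminus\{v\}$ is a maximal independent set of $G_v$, since anything in $G_v$ that could extend it would also extend $S$ in $G$. By Step 2, $G_v$ is itself an equality graph with fewer vertices, so by induction it is well-covered. Therefore $|S|-1=\alpha(G_v)=\alpha(G)-1$, which gives $|S|=\alpha(G)$. The base cases are the empty graph and the isolated vertex, both of which are trivially well-covered.

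Most of this is bookkeeping. The point requiring care is the degenerate case $N[v]=V(G)$ in Step 1. Here we use $\E(\emptyset)=0$ and $\alpha(G)=1$, and we also need the conventions for the empty graph in \eqref{eq:neighborhood-deletion} to be consistent with those in the cited lemma.
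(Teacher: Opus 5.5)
Your proof is correct and follows the paper's argument: the same two-step lower bound on each $\E(G_v)$ is summed and sandwiched against \eqref{eq:neighborhood-deletion}, and your induction on $n$ in Step 3 simply repackages the paper's successive passage to anti-neighborhoods along $S$. One small correction: the case $N[v]=V(G)$ needs no special treatment. The claim $\alpha(G)=1$ that you invoke there is unjustified (the center of a star dominates, yet $\alpha=n-1$) and unnecessary, since $\alpha(G_v)=0\le\alpha(G)-1$ already follows from the same ``add $v$'' argument, and $\alpha(G)=1$ is then a \emph{consequence} of equality rather than an input.
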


\begin{proof}
Put $\alpha=\alpha(G)$, $m=|E(G)|$, and $d(v)=\deg_G(v)$.  Since every
independent set in $G_v$ can be enlarged by adding $v$, we have
\[
   \alpha(G_v)\le \alpha-1.
\]
Also, $|V(G_v)|=n-1-d(v)$.  Applying \eqref{eq:KP-main} to $G_v$ gives
\begin{align*}
 \E(G_v)
 &\ge 2\bigl(n-1-d(v)-\alpha(G_v)\bigr)\\
 &\ge 2\bigl(n-\alpha-d(v)\bigr).
\end{align*}
Summing over all vertices and using $\sum_v d(v)=2m$, we obtain
\begin{equation}\label{eq:lower-sum-antineighborhood}
   \sum_v \E(G_v)
   \ge 2n(n-\alpha)-4m.
\end{equation}
On the other hand, \eqref{eq:neighborhood-deletion} and the equality hypothesis
imply
\[
   \sum_v\E(G_v)
   \le n\E(G)-4m
   =2n(n-\alpha)-4m.
\]
Hence equality holds in \eqref{eq:lower-sum-antineighborhood}.  For each
vertex $v$, both gaps in the two-step lower estimate are nonnegative.  Since
the sum of all these gaps is zero, both inequalities are equalities for every
$v$.  This gives \eqref{eq:alpha-propagation} and
\eqref{eq:energy-propagation}.

To prove that $G$ is well-covered, let $S=\{v_1,\ldots,v_t\}$ be an independent
set.  Starting with $G$, successively pass to the anti-neighborhood of $v_i$ in
the graph remaining after the previous deletions; since $S$ is independent,
each $v_i$ is still present when it is selected.  Equation
\eqref{eq:energy-propagation} ensures that every intermediate graph is again
an equality graph, while \eqref{eq:alpha-propagation} shows that its
independence number drops by exactly one at each step.  Consequently the
remaining graph has independence number $\alpha(G)-t$, so $S$ extends to an
independent set of size $\alpha(G)$.
\end{proof}

\section{Rigidity of the absolute adjacency matrix}

We next track equality in the local $2\times2$ positive-semidefinite estimate
used in the proof of \eqref{eq:neighborhood-deletion}.

\begin{lemma}[Edge rigidity]\label{lem:edge-rigidity}
Let $G$ be a connected nontrivial graph satisfying equality in
\eqref{eq:KP-main}.  For every edge $uv\in E(G)$,
\begin{equation}\label{eq:edge-rigidity}
   B_{uu}=B_{vv},
   \qquad
   |B_{uv}|=B_{uu}-1.
\end{equation}
\end{lemma}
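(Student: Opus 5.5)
The plan is to make explicit the local $2\times 2$ positive-semidefinite estimate hidden in the proof of \eqref{eq:neighborhood-deletion}, and then show that, in the equality case, every such local estimate must be tight. First, equality in \eqref{eq:neighborhood-deletion} holds here. The proof of Proposition~\ref{prop:propagation} shows that both sides of \eqref{eq:neighborhood-deletion} equal $2n(n-\alpha)-4|E(G)|$, since $\sum_v\E(G_v)$ is squeezed between two equal bounds.

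The local estimate comes from the spectral parts. Fix an edge $uv$. From \eqref{eq:spectral-parts} we have $1=A_{uv}=P_{uv}-Q_{uv}$, and therefore
\[
B_{uv}=2P_{uv}-1=2Q_{uv}+1 .
\]
The $2\times2$ principal submatrices of $P$ and $Q$ on $\{u,v\}$ are positive semidefinite. Combined with \eqref{eq:diagPQ}, this gives
\[
|P_{uv}|\le \sqrt{P_{uu}P_{vv}}=\tfrac12\sqrt{B_{uu}B_{vv}},
\qquad
|Q_{uv}|\le\tfrac12\sqrt{B_{uu}B_{vv}} .
\]
It follows that $B_{uv}\le \sqrt{B_{uu}B_{vv}}-1$ and $B_{uv}\ge 1-\sqrt{B_{uu}B_{vv}}$. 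By AM--GM we then obtain
\begin{equation*}
|B_{uv}|\le \sqrt{B_{uu}B_{vv}}-1\le \tfrac12(B_{uu}+B_{vv})-1 .
\end{equation*}
The two claimed identities are exactly the equality conditions in this chain. Equality in AM--GM gives $B_{uu}=B_{vv}$, and equality in the first step gives $|B_{uv}|=B_{uu}-1$.

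It remains to connect this chain to \eqref{eq:neighborhood-deletion}. I would reopen the Kumar--Pragada proof of \cite[Lemma 2.2]{KumarPragada} and rewrite it as an identity of the form
\[
|V(G)|\E(G)-4|E(G)|-\sum_v\E(G_v)=\sum_{uv\in E(G)}\bigl(\text{slack of the above chain at }uv\bigr)\cdot c_{uv}+R ,
\]
where every coefficient $c_{uv}>0$ and the remainder $R\ge0$ collects the other estimates in that proof. I expect these to be the bounds $\E(G_v)\le$ (trace of $B$ compressed to $V(G_v)$), obtained by comparing $B$ restricted to $V(G_v)$ with $|A(G_v)|$. Equality in \eqref{eq:neighborhood-deletion} then forces every edge slack to vanish, which yields \eqref{eq:edge-rigidity} for every edge.

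The main obstacle is this bookkeeping step: showing that the per-edge estimate appears in the Kumar--Pragada argument with a strictly positive weight for every edge. A priori the global inequality might only need the edge bounds in an averaged form, for example summed over $v$ and its neighbours, with weights depending on degrees. I would first try to write the contribution of $v$ as $\sum_{u\in N(v)}$ of the edge inequality at $uv$, added to a trace comparison on $V(G_v)$. Each edge then appears twice, once from each endpoint, with weight $1$, and positivity is automatic. Connectedness and nontriviality only ensure that every vertex lies on an edge. This prevents degenerate terms with $B_{vv}=0$ from occurring in the conclusions.
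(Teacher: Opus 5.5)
Your local analysis is correct. With $x=B_{uu}$, $y=B_{vv}$, $z=B_{uv}$, the $2\times2$ minors of $P$ and $Q$ give $\sqrt{xy}\ge1+|z|$, and tightness of $|z|\le\sqrt{xy}-1\le\frac12(x+y)-1$ is exactly \eqref{eq:edge-rigidity}. The gap is the step you call the main obstacle, and it is the whole content of the lemma. You never show that equality in \eqref{eq:neighborhood-deletion} forces this chain to be tight at every edge, and the mechanism you sketch cannot work. Suppose the remainder $R$ consisted of the bounds $\E(G_v)\le\sum_{w\in V(G_v)}B_{ww}$, obtained by compressing $P$ and $Q$ to $V(G_v)$. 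Then your identity would give $n\E(G)-4|E(G)|\ge\sum_v\sum_{w\in V(G_v)}B_{ww}$. This fails for $G=C_4$: $B_{ww}=1$ for every $w$ and each $G_v$ is a single vertex, so the right side is $4$ while the left side is $16-16=0$. Thus the compressed-trace bound is too weak to give \eqref{eq:neighborhood-deletion} at all. Since the edge terms sit on the nonnegative slack side, no choice of weights $c_{uv}$ can repair this.

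The missing idea is the Schur-complement step of Kumar and Pragada. For $w\in V(G_v)$ we have $A_{vw}=0$, hence $P_{vw}=Q_{vw}$, and also $P_{vv}=Q_{vv}=\frac12B_{vv}>0$. Take the Schur complements $P_v,Q_v$ of the $v$-entry in the restrictions of $P,Q$ to $V(G_v)\cup\{v\}$. They are positive semidefinite, satisfy $P_v-Q_v=A(G_v)$ and $\tr P_v=\tr Q_v$, and so $\E(G_v)\le2\tr P_v$. Computing $\tr P_v$ using $\sum_wB_{vw}^2=(A^2)_{vv}=\deg_G(v)$ gives the exact identity
\[
n\E(G)-4|E(G)|-2\sum_v\tr P_v=\sum_{uv\in E(G)}F_{uv},
\qquad
F_{uv}=\frac{(x-1)^2-z^2}{x}+\frac{(y-1)^2-z^2}{y}.
\]
This is the identity the paper invokes, and equality then gives $F_{uv}=0$ on every edge. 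Note that $F_{uv}$ is a nonlinear function, not a weight-one copy of your edge slack. Once this identity is in hand, your chain does finish the argument. With $s=\sqrt{xy}$ and $t=\frac12(x+y)$,
\[
F_{uv}=\frac{2(t-s)}{s^2}\bigl(s^2+1-z^2\bigr)+\frac{2}{s}\bigl(s-1-|z|\bigr)\bigl(s-1+|z|\bigr).
\]
Here $s^2+1-z^2\ge2s>0$, and $s-1+|z|=0$ only when $s=1$, $z=0$, in which case $s-1-|z|=0$ as well. So $F_{uv}=0$ forces both slacks of your chain to vanish. This is the paper's estimate $F_{uv}\ge\frac{2}{\sqrt{xy}}\bigl((\sqrt{xy}-1)^2-z^2\bigr)\ge0$ in another form. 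Without the Schur-complement identity, however, your proposal does not prove the lemma.
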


\begin{proof}
In the notation of \cite[Claim 2.2]{KumarPragada}, for an edge $uv$ put
\[
   x=B_{uu},\qquad y=B_{vv},\qquad z=B_{uv}.
\]
The $2\times2$ principal submatrices of $2P=B+A$ and $2Q=B-A$ indexed by
$u,v$ are positive semidefinite.  Since $A_{uv}=1$, their determinants give
\[
   xy\ge (z+1)^2,
   \qquad
   xy\ge (z-1)^2.
\]
Thus
\begin{equation}\label{eq:sqrtxy}
   \sqrt{xy}\ge 1+|z|.
\end{equation}
The proof of \eqref{eq:neighborhood-deletion} shows that the quantity
\begin{equation}\label{eq:Fuv}
 F_{uv}:=
 \frac{(x-1)^2-z^2}{x}
 +\frac{(y-1)^2-z^2}{y}
\end{equation}
is nonnegative for every edge.  More precisely, if $P_v$ denotes the
Schur-complement matrix used in \cite[Claim 2.1]{KumarPragada}, then the
proof gives the chain
\[
  \sum_v \E(G_v)\le 2\sum_v\tr(P_v)\le n\E(G)-4m,
\]
and the difference in the second inequality is $\sum_{uv\in E(G)}F_{uv}$.
By Proposition~\ref{prop:propagation}, the two ends of this chain are equal.
Hence both inequalities are equalities and
$\sum_{uv\in E(G)}F_{uv}=0$.  Since every $F_{uv}$ is nonnegative,
$F_{uv}=0$ for every edge.

Now
\begin{align*}
F_{uv}
&=\frac{x+y}{xy}(xy+1-z^2)-4\\
&\ge \frac{2}{\sqrt{xy}}(xy+1-z^2)-4\\
&=\frac{2}{\sqrt{xy}}\bigl((\sqrt{xy}-1)^2-z^2\bigr)\\
&\ge 0.
\end{align*}
The factor $xy+1-z^2$ is positive under \eqref{eq:sqrtxy}.  Hence equality in
the AM--GM step forces $x=y$.  Equality in the last step gives
\[
   (\sqrt{xy}-1)^2=z^2.
\]
Together with \eqref{eq:sqrtxy}, this yields
$\sqrt{xy}=1+|z|$.  Since $x=y$, we obtain
$x=y=1+|z|$, which is \eqref{eq:edge-rigidity}.
\end{proof}

\begin{corollary}\label{cor:constant-diagonal}
Let $G$ be connected, nontrivial, and satisfy equality in
\eqref{eq:KP-main}.  Put
\[
   \alpha:=\alpha(G),
   \qquad
   \tau:=n-\alpha,
   \qquad
   d:=\frac{\tau}{n},
   \qquad
   e:=\frac{\alpha}{n}.
\]
Then $d+e=1$ and
\begin{equation}\label{eq:constant-diagonal}
   P_{vv}=Q_{vv}=d,
   \qquad
   B_{vv}=2d
   \qquad(v\in V(G)).
\end{equation}
For every edge $uv$,
\begin{equation}\label{eq:Bedge}
   B_{uv}\in\{d-e,e-d\}.
\end{equation}
In particular, $\alpha\le n/2$.
\end{corollary}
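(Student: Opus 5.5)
By Lemma~\ref{lem:edge-rigidity}, $B_{uu}=B_{vv}$ holds across every edge $uv$. Since $G$ is connected, the diagonal of $B$ is therefore constant on $V(G)$; call its common value $\beta$. Taking the trace and using \eqref{eq:traceP} together with the equality hypothesis gives
\[
   n\beta=\tr B=\tr P+\tr Q=\E(G)=2(n-\alpha)=2\tau .
\]
Hence $\beta=2\tau/n=2d$. By \eqref{eq:diagPQ}, $P_{vv}=Q_{vv}=\beta/2=d$ for every $v$, which is \eqref{eq:constant-diagonal}. The relation $d+e=1$ is immediate from $\tau+\alpha=n$.

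For an edge $uv$, the second relation in \eqref{eq:edge-rigidity} gives $|B_{uv}|=2d-1$. Since $1=d+e$, we have $2d-1=d-e$, so $B_{uv}=\pm(d-e)$, i.e.\ $B_{uv}\in\{d-e,e-d\}$. This is \eqref{eq:Bedge}.

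For the final claim, note that $|B_{uv}|\ge0$, so $2d-1\ge0$. This gives $\tau\ge n/2$, equivalently $\alpha\le n/2$. Here we use that $G$ is nontrivial and connected, so at least one edge exists and the argument applies. An alternative route would use \eqref{eq:sqrtxy}, which reads $2d=\sqrt{xy}\ge 1+|z|\ge1$.

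No step is a genuine obstacle; the only point needing care is to invoke connectivity, so that the edge-wise equality $B_{uu}=B_{vv}$ propagates to a global constant before the trace is computed.
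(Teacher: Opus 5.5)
Your proposal is correct and matches the paper's proof essentially step for step. Lemma~\ref{lem:edge-rigidity} plus connectivity makes the diagonal of $B$ constant, the trace identity fixes it at $2d$, and $|B_{uv}|=2d-1=d-e\ge 0$ gives both \eqref{eq:Bedge} and $\alpha\le n/2$. You also state explicitly something the paper leaves implicit: nontriviality and connectivity guarantee at least one edge, so the inequality $d-e\ge0$ is actually obtained.
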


\begin{proof}
By Lemma~\ref{lem:edge-rigidity}, $B_{uu}=B_{vv}$ whenever $uv$ is an edge.
Connectedness makes the diagonal of $B$ constant.  Since
\[
   \tr B=\E(G)=2\tau,
\]
we obtain $B_{vv}=2\tau/n=2d$.  Equation \eqref{eq:diagPQ} gives
\eqref{eq:constant-diagonal}.  Lemma~\ref{lem:edge-rigidity} now gives
\[
   |B_{uv}|=2d-1=d-e,
\]
which is nonnegative.  Thus $d\ge e$, equivalently $\alpha\le n/2$, and
\eqref{eq:Bedge} follows.
\end{proof}

\section{The boundary case \texorpdfstring{$\alpha=n/2$}{alpha = n/2}}

\begin{proposition}\label{prop:half}
Let $G$ be connected and nontrivial.  If
\[
   \E(G)=2\bigl(n-\alpha(G)\bigr)
   \qquad\text{and}\qquad
   \alpha(G)=\frac n2,
\]
then $G\cong K_{n/2,n/2}$.
\end{proposition}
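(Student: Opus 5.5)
The plan is to use the rigidity from Corollary~\ref{cor:constant-diagonal} at the boundary value, where it becomes strong enough to pin down $P$ and $Q$ completely. If $\alpha=n/2$, then $d=e=\tfrac12$. So \eqref{eq:constant-diagonal} gives $P_{vv}=Q_{vv}=\tfrac12$ for every $v$, and \eqref{eq:Bedge} gives $B_{uv}=0$ for every edge $uv$. Since $P=\tfrac12(B+A)$ and $Q=\tfrac12(B-A)$, every edge $uv$ then satisfies
\[
   P_{uv}=\tfrac12,\qquad Q_{uv}=-\tfrac12 .
\]

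Next I would write $P$ and $Q$ as Gram matrices. Choose vectors $p_v$ and $q_v$ with $P_{uv}=\langle p_u,p_v\rangle$ and $Q_{uv}=\langle q_u,q_v\rangle$; then every such vector has $|p_v|^2=|q_v|^2=\tfrac12$.
\begin{itemize}[label=$\cdot$]
\item For an edge $uv$ we have $\langle p_u,p_v\rangle=\tfrac12=|p_u|\,|p_v|$. This is equality in Cauchy--Schwarz, so $p_u=p_v$.
\item Likewise $\langle q_u,q_v\rangle=-\tfrac12$ forces $q_u=-q_v$.
\end{itemize}
Since $G$ is connected, all $p_v$ equal a single vector $p$. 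The $q_v$ equal $s_v q$ for one fixed vector $q$ and signs $s_v\in\{\pm1\}$, with $s_u=-s_v$ along every edge. Hence
\[
   P=\tfrac12 J_n,\qquad Q=\tfrac12\, ss^{T},\qquad A=P-Q=\tfrac12\bigl(J_n-ss^{T}\bigr).
\]

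Reading off the entries, $A_{uv}=1$ exactly when $s_u\neq s_v$, and $A_{uv}=0$ otherwise; the diagonal vanishes, as it should. So $G$ is the complete bipartite graph $K_{p,q}$ whose two sides are the sign classes of $s$. Both classes are nonempty because $G$ is nontrivial and connected. Finally, $\alpha(K_{p,q})=\max(p,q)$, and this equals $n/2$ only if $p=q=n/2$. Alternatively, $\E(K_{p,q})=2\sqrt{pq}$ equals $n$ only if $p=q$. Either way, $G\cong K_{n/2,n/2}$.

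I do not expect any step to be a serious obstacle. The only point that needs care is the Cauchy--Schwarz step: it requires exact equality of the norms and the inner product, which is exactly what Corollary~\ref{cor:constant-diagonal} provides. The Gram representation exists because $P,Q\succeq0$.
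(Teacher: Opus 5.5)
Your proof is correct and follows essentially the same route as the paper. You specialize Corollary~\ref{cor:constant-diagonal} to $d=e=\tfrac12$, use Cauchy--Schwarz equality on edges and connectivity to get $P=\tfrac12 J_n$, and split the vertices into sign classes via the $q$-vectors. The only difference is minor: you propagate $q_u=-q_v$ along edges to get $Q=\tfrac12 ss^{T}$ and read off $A=P-Q$, whereas the paper computes $Q_{uv}=P_{uv}-A_{uv}=\pm\tfrac12$ for every pair and applies Cauchy--Schwarz pairwise; both arguments work.
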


\begin{proof}
Here $d=e=1/2$.  By \eqref{eq:Bedge}, $B_{uv}=0$ on every edge.  Therefore,
for every edge $uv$,
\[
   P_{uv}=\frac{B_{uv}+A_{uv}}2=\frac12=P_{uu}=P_{vv}.
\]
Choose Gram vectors $p_v$ with $P_{uv}=\langle p_u,p_v\rangle$.  Equality in
Cauchy--Schwarz gives $p_u=p_v$ on every edge.  Since $G$ is connected, all
$p_v$ are equal, and hence
\[
   P=\frac12 J_n.
\]
For $u\ne v$, we now have
\[
   Q_{uv}=P_{uv}-A_{uv}
   =\begin{cases}
      \frac12,&uv\notin E(G),\\
      -\frac12,&uv\in E(G).
    \end{cases}
\]
Choose Gram vectors $q_v$ for $Q$. Since
$\|q_v\|^2=Q_{vv}=1/2$ for every $v$, and since for distinct $u,v$ we have
$Q_{uv}\in\{1/2,-1/2\}$, equality holds in the Cauchy--Schwarz inequality for
every pair $q_u,q_v$. More explicitly,
\[
   Q_{uv}=\frac12 \Longrightarrow \|q_u-q_v\|^2=0,
   \qquad
   Q_{uv}=-\frac12 \Longrightarrow \|q_u+q_v\|^2=0.
\]
Thus $q_u=q_v$ when $u$ and $v$ are nonadjacent, while $q_u=-q_v$ when they
are adjacent. Hence the vertices split into two sign classes, with no edges
inside either class and all possible edges between the two classes. Therefore
$G$ is complete bipartite.  Finally, its independence number is the size of its
larger part, so $\alpha(G)=n/2$ forces the two parts to have equal size.
\end{proof}

\section{The strict case \texorpdfstring{$\alpha<n/2$}{alpha < n/2}}

Throughout this section, $G$ is connected, nontrivial, satisfies equality in
\eqref{eq:KP-main}, and
\[
   \alpha:=\alpha(G)<\frac n2.
\]
Retain the notation $d=(n-\alpha)/n$ and $e=\alpha/n$, and set
\begin{equation}\label{eq:Rdef}
   R:=\frac{n}{\alpha}=\frac1e>2.
\end{equation}
Choose Gram representations
$
   P_{uv}=\langle p_u,p_v\rangle,
   \quad
   Q_{uv}=\langle q_u,q_v\rangle,
$
so that $\|p_v\|^2=\|q_v\|^2=d$ for every $v$.

\subsection{The two possible edge types}

\begin{lemma}[The two edge types]\label{lem:two-edge-types}
For every edge $uv\in E(G)$, exactly one of the following alternatives holds:
\begin{enumerate}
\item[\textup{(P)}]
\[
   B_{uv}=d-e,
   \qquad
   P_{uv}=d,
   \qquad
   Q_{uv}=-e,
   \qquad
   p_u=p_v;
\]
\item[\textup{(Q)}]
\[
   B_{uv}=e-d,
   \qquad
   P_{uv}=e,
   \qquad
   Q_{uv}=-d,
   \qquad
   q_u=-q_v.
\]
\end{enumerate}
\end{lemma}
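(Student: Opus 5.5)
The argument starts from Corollary~\ref{cor:constant-diagonal}, which gives $B_{uv}\in\{d-e,e-d\}$ on every edge $uv$ together with $P_{vv}=Q_{vv}=d$. The plan is to translate each of the two possible values of $B_{uv}$ into values of $P_{uv}$ and $Q_{uv}$, and then read off the vector identities from the equality case of Cauchy--Schwarz. The translation uses $P=(B+A)/2$, $Q=(B-A)/2$ and $A_{uv}=1$ on an edge.

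In the first case, $B_{uv}=d-e$. Then $P_{uv}=(d-e+1)/2$, and since $d+e=1$ this equals $(2d)/2=d$. Likewise $Q_{uv}=(d-e-1)/2=-e$. We now have $\langle p_u,p_v\rangle=d=\|p_u\|\,\|p_v\|$, so equality holds in Cauchy--Schwarz. Together with $\|p_u\|=\|p_v\|$ this gives $\|p_u-p_v\|^2=d+d-2d=0$, that is, $p_u=p_v$. This is alternative (P).

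In the second case, $B_{uv}=e-d$. Then $P_{uv}=(e-d+1)/2=e$ and $Q_{uv}=(e-d-1)/2=-d$. Now $\|q_u+q_v\|^2=d+d-2d=0$, so $q_u=-q_v$. This is alternative (Q).

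It remains to show that exactly one alternative holds. Since $\alpha<n/2$, we have $e<d$, so $d-e\neq e-d$ and the two values of $B_{uv}$ are distinct. Hence the cases are mutually exclusive, and by \eqref{eq:Bedge} they are exhaustive. I do not expect any step to be a real obstacle. The only point needing care is using the strict inequality $\alpha<n/2$ for exclusivity: in the boundary case both alternatives collapse, which is exactly the situation handled separately in Proposition~\ref{prop:half}.
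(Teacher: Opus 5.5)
Your proposal is correct and follows essentially the same argument as the paper. You take $B_{uv}\in\{d-e,e-d\}$ from Corollary~\ref{cor:constant-diagonal}, convert to $P_{uv},Q_{uv}$ via $P=(B+A)/2$, $Q=(B-A)/2$ and $d+e=1$, obtain $p_u=p_v$ or $q_u=-q_v$ from $\|p_u-p_v\|^2=0$ or $\|q_u+q_v\|^2=0$, and use $d>e$ (from $\alpha<n/2$) to make the two cases mutually exclusive.
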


\begin{proof}
Let $uv\in E(G)$. Since $G$ is simple, $A_{uv}=1$. Moreover,
\[
   P=\frac{B+A}{2},
   \qquad
   Q=\frac{B-A}{2},
   \qquad
   d+e=1.
\]
By \eqref{eq:Bedge}, either $B_{uv}=d-e$ or $B_{uv}=e-d$.
Because $\alpha<n/2$, we have $d>e$, so these two values are distinct.

Suppose first that $B_{uv}=d-e$. Then
\[
\begin{aligned}
   P_{uv}
   &=\frac{B_{uv}+A_{uv}}{2}
     =\frac{d-e+1}{2}
     =\frac{d-e+d+e}{2}
     =d,\\
   Q_{uv}
   &=\frac{B_{uv}-A_{uv}}{2}
     =\frac{d-e-1}{2}
     =\frac{d-e-d-e}{2}
     =-e.
\end{aligned}
\]
Since $P$ is the Gram matrix of the vectors $\{p_x:x\in V(G)\}$,
we have $P_{xy}=\langle p_x,p_y\rangle$ for all $x,y$. In addition,
$P_{uu}=P_{vv}=d$. Hence
\[
\begin{aligned}
   \|p_u-p_v\|^2
   &=\|p_u\|^2+\|p_v\|^2-2\langle p_u,p_v\rangle\\
   &=P_{uu}+P_{vv}-2P_{uv}\\
   &=d+d-2d=0.
\end{aligned}
\]
Therefore $p_u=p_v$, and alternative \textup{(P)} holds.

Suppose now that $B_{uv}=e-d$. Then
\[
\begin{aligned}
   P_{uv}
   &=\frac{B_{uv}+A_{uv}}{2}
     =\frac{e-d+1}{2}
     =\frac{e-d+d+e}{2}
     =e,\\
   Q_{uv}
   &=\frac{B_{uv}-A_{uv}}{2}
     =\frac{e-d-1}{2}
     =\frac{e-d-d-e}{2}
     =-d.
\end{aligned}
\]
Since $Q$ is the Gram matrix of the vectors $\{q_x:x\in V(G)\}$,
we have $Q_{xy}=\langle q_x,q_y\rangle$ for all $x,y$, and
$Q_{uu}=Q_{vv}=d$. Consequently,
\[
\begin{aligned}
   \|q_u+q_v\|^2
   &=\|q_u\|^2+\|q_v\|^2+2\langle q_u,q_v\rangle\\
   &=Q_{uu}+Q_{vv}+2Q_{uv}\\
   &=d+d+2(-d)=0.
\end{aligned}
\]
Thus $q_u=-q_v$, and alternative \textup{(Q)} holds.
\end{proof}

\subsection{\texorpdfstring{$P$-classes}{P-classes} and cells}

\begin{definition}\label{def:Pclasses-cells}
Two vertices $u,v$ are in the same \emph{$P$-class} if $p_u=p_v$.  Inside each
$P$-class, two vertices are in the same \emph{cell} if, in addition,
$q_u=q_v$.
\end{definition}

\begin{lemma}[Structure inside a $P$-class]\label{lem:inside-Pclass}
Let $\mathcal{C}$ be a $P$-class and let
$C_1,\ldots,C_r$ be its cells.  Then:
\begin{enumerate}[label=\textup{(\roman*)}]
\item each $C_i$ is independent;
\item every two distinct cells $C_i,C_j$ are completely joined;
\item if $x_i$ is the common $q$-vector of $C_i$, then
\begin{equation}\label{eq:simplex-gram}
   \langle x_i,x_i\rangle=d,
   \qquad
   \langle x_i,x_j\rangle=-e
   \quad(i\ne j);
\end{equation}
\item $r\le R$.
\end{enumerate}
\end{lemma}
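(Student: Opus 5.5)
Within a single $P$-class all vertices share one $p$-vector, so for any two vertices $u,v$ of the class we have $P_{uv}=\langle p_u,p_v\rangle=\|p_u\|^2=d$. Since $A_{uv}\in\{0,1\}$ and $Q=P-A$, this gives $Q_{uv}=d$ when $uv\notin E(G)$ and $Q_{uv}=d-1=-e$ when $uv\in E(G)$. The whole lemma comes from combining this with the fact that $\|q_u\|^2=d$ for every vertex.

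For (i), take $u,v$ in the same cell $C_i$. Then $q_u=q_v$, so $Q_{uv}=d$. If $uv$ were an edge we would have $Q_{uv}=-e$, and $-e\ne d$ because $d+e=1$ and $e>0$. So no edge lies inside a cell.

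For (ii), take $u\in C_i$ and $v\in C_j$ with $i\ne j$. If $uv\notin E(G)$, then $Q_{uv}=d=\|q_u\|\,\|q_v\|$. This is equality in Cauchy--Schwarz, so $\|q_u-q_v\|^2=2d-2d=0$ and $q_u=q_v$, placing $u,v$ in the same cell, a contradiction. Hence $uv\in E(G)$, and the two cells are completely joined.

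For (iii), the common vector $x_i$ satisfies $\langle x_i,x_i\rangle=Q_{uu}=d$. For $i\ne j$, pick any $u\in C_i$ and $v\in C_j$; they are adjacent by (ii), so $\langle x_i,x_j\rangle=Q_{uv}=-e$. Equivalently, every such edge has $P_{uv}=d$, which is type (P) of Lemma~\ref{lem:two-edge-types}.

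For (iv), I would apply positive semidefiniteness of the Gram matrix of $x_1,\ldots,x_r$ to the all-ones vector:
\[
0\le \Bigl\|\sum_{i=1}^r x_i\Bigr\|^2 = rd - r(r-1)e = r\bigl(d-(r-1)e\bigr).
\]
This gives $(r-1)e\le d=1-e$, that is, $re\le 1$, so $r\le 1/e=R$.

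Every step is a direct Cauchy--Schwarz or positivity check, so I see no genuine obstacle. The only point needing care is in (i): the argument relies on $d\ne -e$, which holds since $d+e=1$ and $e>0$.
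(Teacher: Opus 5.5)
Your proposal is correct and matches the paper's proof essentially step for step: (i)--(iii) rest on $P_{uv}=d$ within a $P$-class, the identity $Q=P-A$, and the equality case of Cauchy--Schwarz, exactly as the paper argues. For (iv), evaluating the Gram form at the all-ones vector is the same computation as the paper's observation that $I_r-eJ_r$ has eigenvalue $1-er$ on $\mathbf{1}$, so nothing is missing.
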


\begin{proof}
If $u,v$ lie in the same cell, then $P_{uv}=Q_{uv}=d$, so
$A_{uv}=0$; hence each cell is independent.

Now take $u\in C_i$ and $v\in C_j$ with $i\ne j$.  Since $p_u=p_v$, we have
$P_{uv}=d$.  If $uv$ were a nonedge, then $Q_{uv}=P_{uv}=d$, which would imply
$q_u=q_v$, contrary to $i\ne j$.  Hence $uv$ is an edge.  By
Lemma~\ref{lem:two-edge-types}, it is an edge of type (P), and therefore
$Q_{uv}=-e$.  This proves complete adjacency and \eqref{eq:simplex-gram}.

The Gram matrix of $x_1,\ldots,x_r$ is
\[
   (d+e)I_r-eJ_r=I_r-eJ_r.
\]
Its eigenvalues are $1$ with multiplicity $r-1$ and $1-er$ with multiplicity
one.  Positive semidefiniteness gives $1-er\ge0$, that is,
$r\le 1/e=R$.
\end{proof}

\begin{lemma}[Edges between distinct $P$-classes]\label{lem:cross-edges}
Let $u,v$ lie in distinct $P$-classes.  Then
\begin{equation}\label{eq:cross-antipodal}
   uv\in E(G)
   \quad\Longleftrightarrow\quad
   q_v=-q_u.
\end{equation}
Moreover, a cell has at most one antipodal cell in the whole graph.
\end{lemma}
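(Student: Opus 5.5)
We prove the two implications of \eqref{eq:cross-antipodal} separately, and then derive the uniqueness of antipodal cells from the first one.

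For the forward implication, let $uv\in E(G)$ with $u,v$ in distinct $P$-classes, so $p_u\ne p_v$. By Lemma~\ref{lem:two-edge-types}, the edge $uv$ is of type (P) or of type (Q). Type (P) forces $p_u=p_v$, which is excluded. Hence $uv$ is of type (Q), and therefore $q_v=-q_u$.

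For the reverse implication, suppose $q_v=-q_u$ but $uv\notin E(G)$. Then $A_{uv}=0$, so $P_{uv}=Q_{uv}$, and also $Q_{uv}=\langle q_u,-q_u\rangle=-d$. Hence
\[
\langle p_u,p_v\rangle=P_{uv}=-d .
\]
Since $\|p_u\|^2=\|p_v\|^2=d$, equality holds in Cauchy--Schwarz, so $p_v=-p_u$. Note that $d>0$, because $d=\tau/n$ and $G$ is nontrivial and connected. We now need a contradiction. The idea is to use a neighbour $w$ of $u$, which exists because $G$ is connected and nontrivial. We know $P_{uw}\in\{d,e\}$ by Lemma~\ref{lem:two-edge-types}, and both values are positive. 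On the other hand,
\[
P_{vw}=\langle -p_u,p_w\rangle=-P_{uw}<0 .
\]
If $vw$ were a nonedge, we would get $Q_{vw}=P_{vw}<0$ and $B_{vw}=2P_{vw}<0$. If $vw$ were an edge, then $P_{vw}\in\{d,e\}$ would be positive, which is impossible. So $vw$ is a nonedge with $B_{vw}=-2P_{uw}$.

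To exclude this, I would use that $B=|A|$ is entrywise constrained. The first thing I would try is the identity $B^2=A^2$ combined with the fact that $P$ and $Q$ have complementary supports ($PQ=0$). Concretely, $PQ=0$ gives
\[
\sum_x \langle p_u,p_x\rangle\langle q_x,q_v\rangle=0 .
\]
I would evaluate this sum using $p_v=-p_u$ and $q_v=-q_u$. Then $\langle p_v,p_x\rangle\langle q_x,q_v\rangle=\langle p_u,p_x\rangle\langle q_x,q_u\rangle$, so the $(v,v)$ and $(u,u)$ entries of $PQ$ agree, and row $v$ of $PQ$ equals row $u$. A cleaner route is the following. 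For nonedges, $P_{xy}=Q_{xy}$, so $\langle p_u,p_v\rangle=\langle q_u,q_v\rangle$. Hence the vector $(p_u,q_u)$ satisfies $\langle (p_u,q_u),(p_v,q_v)\rangle=-2d$ while both vectors have norm$^2$ $2d$. This forces $(p_v,q_v)=-(p_u,q_u)$. Then for every $x$,
\[
A_{vx}=P_{vx}-Q_{vx}=-(P_{ux}-Q_{ux})=-A_{ux} .
\]
Since adjacency entries are nonnegative, $A_{ux}=A_{vx}=0$ for all $x$, so $u$ is isolated. This contradicts the connectedness of $G$. I expect this sign argument to be the main step, and it resolves the reverse implication.

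Finally, suppose a cell $C$ with $q$-vector $x$ had two antipodal cells $C',C''$ in the whole graph. Both would have $q$-vector $-x$, so they must lie in different $P$-classes by the definition of cells. The same argument then applies to nonadjacent vertices $a\in C'$ and $b\in C''$ in distinct $P$-classes: we have $Q_{ab}=d$, hence $P_{ab}=d$, hence $p_a=p_b$, which is a contradiction. If instead $a$ and $b$ are adjacent, then the forward implication gives $q_b=-q_a$, i.e.\ $-x=x$, so $x=0$, contradicting $\|x\|^2=d>0$. Hence each cell has at most one antipodal cell.
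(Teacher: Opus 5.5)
Your proof is correct and follows the paper's argument essentially step for step: the forward direction via the edge-type lemma, the converse via $p_v=-p_u$ forcing $A_{vx}=-A_{ux}$ and hence a zero row contradicting connectedness, and uniqueness via the same edge/nonedge case split (you even make explicit the $\|x\|^2=d>0$ step that the paper leaves implicit). The exploratory detour through a neighbour $w$ and $PQ=0$ is unnecessary and should be cut; note in particular that $B=|A|$ is not entrywise sign-constrained, since type (Q) edges have $B_{uv}=e-d<0$.
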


\begin{proof}
If $uv$ is an edge, it cannot be of type (P), because $p_u\ne p_v$.
Therefore it is of type (Q), and $q_v=-q_u$.

Conversely, suppose that $q_v=-q_u$ and that $uv$ is a nonedge.  Then
$P_{uv}=Q_{uv}=-d$.  Since $\|p_u\|^2=\|p_v\|^2=d$, equality in
Cauchy--Schwarz gives $p_v=-p_u$.  Hence, for every $w\in V(G)$,
\[
 A_{vw}
 =\langle p_v,p_w\rangle-\langle q_v,q_w\rangle
 =-\langle p_u,p_w\rangle+\langle q_u,q_w\rangle
 =-A_{uw}.
\]
All entries of $A$ are nonnegative, so the $u$- and $v$-rows of $A$ are both
zero.  This contradicts connectedness of the nontrivial graph $G$.  Thus
$uv$ is an edge, proving \eqref{eq:cross-antipodal}.

For uniqueness, suppose that two cells $D$ and $D'$ both have $q$-vector
$-x$, where $x$ is the $q$-vector of a cell $C$.  Choose $y\in D$ and
$z\in D'$.  Since $q_y=q_z$, the pair $yz$ cannot be an edge of type (Q).
If it were an edge of type (P), then $p_y=p_z$; together with $q_y=q_z$,
this would put $y$ and $z$ in the same cell.  Thus, if $D\ne D'$, the pair
$yz$ is a nonedge.  It follows that
\[
   P_{yz}=Q_{yz}=d.
\]
Since $P_{yy}=P_{zz}=d$, we obtain
\[
   \|p_y-p_z\|^2=P_{yy}+P_{zz}-2P_{yz}=0.
\]
Hence $p_y=p_z$, and therefore $D=D'$, a contradiction.  Thus the antipodal
cell is unique.
\end{proof}

\subsection{The weighted bipartite incidence graph}

The next construction translates the cell adjacencies into a weighted matching problem.

\begin{definition}[Cell incidence graph]\label{def:incidence-graph}
Let \(\mathcal C_1,\ldots,\mathcal C_s\) be the \(P\)-classes of \(G\).
We construct a weighted bipartite graph $\mathcal B$ with bipartition
$(L,Z)$ as follows.
The left part is
\[
L=\{u_1,\ldots,u_s\},
\]
where the vertex \(u_i\) represents the \(P\)-class \(\mathcal C_i\).  The
right part $Z$ consists of the vertices introduced below.

Each cell of \(G\) corresponds to exactly one edge of \(\mathcal B\).
The left endpoint of this edge is the vertex representing the
\(P\)-class containing the cell. Its right endpoint is determined as
follows.

Suppose that two cells \(C\in\mathcal C_i\) and
\(D\in\mathcal C_j\) have antipodal \(q\)-vectors. By
Lemma~\ref{lem:cross-edges}, the antipodal partner of a cell is unique,
and \(i\ne j\). Introduce one right vertex \(z_{\{C,D\}}\), and add the
two edges
\[
u_i z_{\{C,D\}}
\qquad\text{and}\qquad
u_j z_{\{C,D\}}.
\]
The first edge represents the cell \(C\), and the second edge represents
the cell \(D\). Assign them the weights
\[
w\bigl(u_i z_{\{C,D\}}\bigr)=|C|,
\qquad
w\bigl(u_j z_{\{C,D\}}\bigr)=|D|.
\]

If a cell \(C\in\mathcal C_i\) has no antipodal partner, introduce a new
right vertex \(z_C\), used only for \(C\), and add the single edge $u_i z_C$ with weight $w(u_i z_C)=|C|$.

Thus, the edges of \(\mathcal B\) are in one-to-one correspondence with
the cells of \(G\), and the weight of an edge is the cardinality of the
corresponding cell. A right vertex has degree \(2\) when it represents
an antipodal pair of cells, and degree \(1\) when it represents a cell
without an antipodal partner.
\end{definition}

\paragraph{Example of the cell incidence graph.}
Suppose that the cells of \(G\) are divided into two \(P\)-classes
\[
   \mathcal C_1=\{C_1,C_2\},
   \qquad
   \mathcal C_2=\{D_1,D_2\},
\]
where \(C_1\) and \(D_1\) have antipodal \(q\)-vectors, while \(C_2\) and
\(D_2\) have no antipodal partners.  Take
\[
   |C_1|=2,\qquad |C_2|=1,\qquad
   |D_1|=3,\qquad |D_2|=2.
\]
The paired cells \(C_1,D_1\) are represented by two edges sharing one right
endpoint, whereas each unpaired cell has a private right endpoint.  The
resulting weighted incidence graph is shown in
Figure~\ref{fig:cell-incidence-example}.

\begin{figure}[ht]
\centering
\begin{tikzpicture}[
    scale=.8,
    leftvertex/.style={
        circle,
        draw=blue!70!black,
        fill=blue!10,
        thick,
        minimum size=12mm
    },
    rightvertex/.style={
        circle,
        draw=orange!80!black,
        fill=orange!12,
        thick,
        minimum size=12mm
    },
    celledge/.style={
        thick
    },
    every node/.style={
        font=\small
    }
]

\node[font=\bfseries] at (0,3.9) {\(P\)-classes};
\node[font=\bfseries] at (6,3.9) {Right vertices};

\node[leftvertex] (u1) at (0,1.5) {\(u_1\)};
\node[leftvertex] (u2) at (0,-1.5) {\(u_2\)};

\node[rightvertex] (privateC2) at (6,3) {\(z_{C_2}\)};
\node[rightvertex] (pair) at (6,0) {\(z_{\{C_1,D_1\}}\)};
\node[rightvertex] (privateD2) at (6,-3) {\(z_{D_2}\)};

\draw[celledge]
    (u1) --
    node[above, sloped] {\(C_2\), weight \(1\)}
    (privateC2);

\draw[celledge]
    (u1) --
    node[above, sloped] {\(C_1\), weight \(2\)}
    (pair);

\draw[celledge]
    (u2) --
    node[above, sloped] {\(D_1\), weight \(3\)}
    (pair);

\draw[celledge]
    (u2) --
    node[above, sloped] {\(D_2\), weight \(2\)}
    (privateD2);

\node[align=center, blue!70!black] at (0,-4.1)
    {one left vertex\\for each \(P\)-class};

\node[align=center, orange!80!black] at (6,-4.3)
    {degree \(2\): antipodal pair\\
     degree \(1\): unpaired cell};

\end{tikzpicture}
\caption{A cell incidence graph.  Each cell is represented by one weighted
edge; antipodal cells correspond to edges sharing a right endpoint.}
\label{fig:cell-incidence-example}
\end{figure}

\subsection{Weighted matchings and their fractional relaxation}

We briefly recall the weighted matching optimization framework that will be
used below.  Let \(H\) be a graph and let
\[
   w:E(H)\longrightarrow \mathbb{R}_{>0}
\]
be a positive edge-weight function, and write $w_e:=w(e)$.  The
maximum-weight matching number of \((H,w)\) is
\[
   \nu_w(H)
   :=
   \max\left\{
      \sum_{e\in M}w_e:
      M\text{ is a matching in }H
   \right\}.
\]

A fractional matching of \(H\) is a vector
\(x=(x_e)_{e\in E(H)}\) satisfying
\[
   x_e\ge 0
   \qquad(e\in E(H))
\]
and
\[
   \sum_{e\ni v}x_e\le 1
   \qquad(v\in V(H)).
\]
Its weight is \(\sum_{e\in E(H)}w_ex_e\).  We denote the maximum possible
weight of a fractional matching by
\[
   \nu_w^*(H)
   :=
   \max\left\{
      \sum_{e\in E(H)}w_ex_e:
      \sum_{e\ni v}x_e\le1\ (v\in V(H)),\
      x_e\ge0\ (e\in E(H))
   \right\}.
\]

Equivalently, \(\nu_w^*(H)\) is the optimal value of the linear program
\begin{equation}\label{eq:fractional-matching-primal}
\begin{aligned}
\text{maximize}\quad
   &\sum_{e\in E(H)}w_ex_e,\\
\text{subject to}\quad
   &\sum_{e\ni v}x_e\le1
      &&(v\in V(H)),\\
   &x_e\ge0
      &&(e\in E(H)).
\end{aligned}
\end{equation}
Its dual program is
\begin{equation}\label{eq:fractional-matching-dual}
\begin{aligned}
\text{minimize}\quad
   &\sum_{v\in V(H)}y_v,\\
\text{subject to}\quad
   &y_u+y_v\ge w_{uv}
      &&(uv\in E(H)),\\
   &y_v\ge0
      &&(v\in V(H)).
\end{aligned}
\end{equation}

By strong linear-programming duality, the programs
\eqref{eq:fractional-matching-primal} and
\eqref{eq:fractional-matching-dual} have the same optimal value.

We shall use the following standard facts.

\begin{proposition}[Optimality relations for weighted bipartite matchings]
\label{prop:bipartite-matching-LP}
Let $H$ be a bipartite graph with positive edge weights $w$. Then
\[
   \nu_w(H)=\nu_w^*(H).
\]
Moreover, suppose that $x$ is feasible for
\eqref{eq:fractional-matching-primal} and $y$ is feasible for
\eqref{eq:fractional-matching-dual}, and that the two objective values are
equal. Then both $x$ and $y$ are optimal, and
\begin{align}
x_{uv}>0
   &\quad\Longrightarrow\quad
   y_u+y_v=w_{uv},
   \label{eq:optimality-edge}\\
y_v>0
   &\quad\Longrightarrow\quad
   \sum_{e\ni v}x_e=1.
   \label{eq:optimality-vertex}
\end{align}
\end{proposition}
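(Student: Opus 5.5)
We plan to prove the two assertions of Proposition~\ref{prop:bipartite-matching-LP} separately. The equality $\nu_w(H)=\nu_w^*(H)$ is the classical integrality statement, and we intend to derive it from total unimodularity. The optimality relations \eqref{eq:optimality-edge}--\eqref{eq:optimality-vertex} are complementary slackness, which we will obtain from weak duality.

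For the first assertion, every matching $M$ gives the feasible integral point $x=\chi_M$ of \eqref{eq:fractional-matching-primal}, so $\nu_w(H)\le\nu_w^*(H)$. For the reverse inequality, we use the standard fact that the vertex--edge incidence matrix $N$ of a bipartite graph is totally unimodular. We will prove this by induction on the size of a square submatrix. A submatrix with a zero column has determinant $0$. A column with a single nonzero entry allows Laplace expansion along that column. If every column has exactly two ones, then these ones lie in rows from opposite sides of the bipartition; subtracting the sum of the rows from one side from the sum of the rows on the other gives zero, so the determinant vanishes.

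Hence the polytope $\{x\ge0: Nx\le\one\}$ has integral vertices. The feasible region is nonempty and bounded, since $0\le x_e\le1$. The linear objective therefore attains its maximum at a vertex. That vertex is a $0/1$ vector satisfying the degree constraints, that is, the indicator of a matching. This gives $\nu_w^*(H)\le\nu_w(H)$.

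For the second assertion, let $x$ and $y$ be feasible with equal objective values. The key step is the identity
\[
\sum_{v}y_v-\sum_{e}w_ex_e
=\sum_{uv\in E(H)}x_{uv}\bigl(y_u+y_v-w_{uv}\bigr)
+\sum_{v}y_v\Bigl(1-\sum_{e\ni v}x_e\Bigr).
\]
We will check it by expanding $\sum_{uv}x_{uv}(y_u+y_v)=\sum_v y_v\sum_{e\ni v}x_e$. Every summand on the right-hand side is a product of two nonnegative factors, by feasibility. So the identity already yields weak duality, $\sum_e w_ex_e\le\sum_v y_v$, for every feasible pair.

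Since $\sum_e w_ex_e\le\sum_v y_v$ for every feasible pair, a pair with equal objective values is optimal for both programs. Equal values make the left-hand side zero, so every summand on the right vanishes. This gives \eqref{eq:optimality-edge} when $x_{uv}>0$, and \eqref{eq:optimality-vertex} when $y_v>0$.

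The only nontrivial step is the total unimodularity argument. In particular, the case in which every column of the square submatrix has two ones uses bipartiteness essentially, and it needs care to state correctly. Alternatively, we could simply cite this fact as standard, for example from Schrijver's treatment of bipartite matching polytopes, and then invoke strong duality as the paper already does.
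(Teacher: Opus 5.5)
Your proposal is correct and follows the paper's argument for the optimality relations exactly. The paper uses the same identity, writing $\sum_v y_v-\sum_e w_ex_e$ as a sum of products of nonnegative factors, from which weak duality, optimality of both $x$ and $y$, and both complementary-slackness implications follow.

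The only difference is in the first assertion. The paper simply cites the bipartite weighted matching theorem (Lov\'asz--Plummer) for $\nu_w(H)=\nu_w^*(H)$, whereas your total-unimodularity argument proves that fact directly and correctly. The one step you leave implicit, passing to integral vertices of $\{x\ge0: Nx\le\one\}$, is routine: stacking $-I$ under $N$ preserves total unimodularity.
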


\begin{proof}
The equality $\nu_w(H)=\nu_w^*(H)$ is the standard weighted matching theorem
for bipartite graphs; see, for example, \cite{LovaszPlummer}.

For the remaining statements, feasibility gives
\begin{align*}
&\sum_{v\in V(H)}y_v-\sum_{e\in E(H)}w_ex_e\\
&\qquad=
\sum_{v\in V(H)}
 y_v\left(1-\sum_{e\ni v}x_e\right)
+
\sum_{uv\in E(H)}
 x_{uv}\bigl(y_u+y_v-w_{uv}\bigr).
\end{align*}
Every term on the right-hand side is nonnegative. If the two objective values
are equal, the left-hand side is zero, so every term on the right must vanish.
It follows that a positive $x_{uv}$ makes the corresponding dual edge
inequality an equality, which gives \eqref{eq:optimality-edge}; similarly, a
positive $y_v$ makes the corresponding fractional-matching constraint an
equality, which gives \eqref{eq:optimality-vertex}. Equality of the objective
values also shows directly that both feasible solutions are optimal.
\end{proof}

\begin{lemma}[Cell adjacencies and matchings]
\label{lem:line-graph}
Two distinct cells are completely joined in $G$ if and only if the
corresponding edges of $\mathcal{B}$ share an endpoint.  Consequently,
\begin{equation}\label{eq:alpha-matching}
   \alpha(G)=\nu_w(\mathcal{B}),
\end{equation}
where $\nu_w(\mathcal{B})$ is the maximum weight of a matching in
$\mathcal{B}$.
\end{lemma}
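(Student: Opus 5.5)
The proof splits into two parts. First we show that adjacency between cells matches the line-graph structure of $\mathcal B$. Then we use this to identify maximal independent sets of $G$ with matchings of $\mathcal B$.

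\emph{Adjacency between cells.} Take distinct cells $C$ and $D$, and fix $u\in C$, $v\in D$.

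Suppose first that $C$ and $D$ lie in the same $P$-class. Then their edges in $\mathcal B$ share the left endpoint $u_i$. By Lemma~\ref{lem:inside-Pclass}(ii), $C$ and $D$ are completely joined.

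Suppose instead that $C$ and $D$ lie in distinct $P$-classes. Their edges in $\mathcal B$ then have different left endpoints, so they share an endpoint exactly when they share a right vertex. By construction this happens exactly when $C$ and $D$ are an antipodal pair, i.e.\ $q_v=-q_u$. By Lemma~\ref{lem:cross-edges}, this is equivalent to $uv\in E(G)$.

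In both cases adjacency does not depend on the choice of $u\in C$ and $v\in D$. So any two distinct cells are either completely joined or completely nonadjacent, and the first alternative occurs exactly when the corresponding edges of $\mathcal B$ share an endpoint. Together with Lemma~\ref{lem:inside-Pclass}(i), which says each cell is independent, this shows that $G$ is obtained from the line graph $L(\mathcal B)$ by replacing each vertex (cell) $C$ with an independent set of size $|C|$.

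\emph{Independent sets versus matchings.} Let $S$ be an independent set of $G$. If $S$ meets two cells, those cells are nonadjacent, so their edges in $\mathcal B$ are disjoint. Hence the cells met by $S$ form a matching $M$, and $|S|\le\sum_{e\in M}w_e$. Conversely, for a matching $M$, the union of the cells corresponding to its edges is independent: each cell is independent, and the cells are pairwise nonadjacent because their edges are disjoint. This set has size $w(M)$. Taking maxima over $S$ and $M$ gives $\alpha(G)=\nu_w(\mathcal B)$.

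The one point needing care is that the cells partition $V(G)$ and the edges of $\mathcal B$ correspond bijectively to cells. Both hold by Definitions~\ref{def:Pclasses-cells} and~\ref{def:incidence-graph}, and the uniqueness of antipodal partners in Lemma~\ref{lem:cross-edges} ensures every right vertex has degree at most $2$. With these in place the argument is direct bookkeeping, and I do not expect any step to be an obstacle.
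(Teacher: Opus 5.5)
Your proposal is correct and follows essentially the same route as the paper. Both proofs first check that two distinct cells are completely joined exactly when their $\mathcal B$-edges share a left endpoint (same $P$-class) or a right endpoint (antipodal pair), and then identify independent sets with matchings. The only small difference is that the paper enlarges a maximum independent set to a union of whole cells, whereas you bound $|S|$ directly by the total weight of the cells it meets; this avoids the enlargement step.
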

\begin{proof}
Cells in the same $P$-class are completely joined by
Lemma~\ref{lem:inside-Pclass}; their edges in $\mathcal{B}$ share the
corresponding left endpoint.  Cells in different $P$-classes are completely
joined exactly when their $q$-vectors are antipodal, by
Lemma~\ref{lem:cross-edges}; their edges in $\mathcal{B}$ then share the
corresponding right endpoint.  There are no other cell adjacencies.

All vertices in a cell have the same $p$-vector and the same $q$-vector, and
therefore the same adjacency to every vertex outside the cell.  Thus any
independent set that meets a cell may be enlarged to contain the whole cell.
It follows that a maximum independent set is a union of pairwise nonadjacent
cells.  Under the preceding correspondence, such a collection is exactly a
matching in $\mathcal{B}$, and its cardinality is the sum of the cell weights.
This proves \eqref{eq:alpha-matching}.
\end{proof}

\begin{lemma}[Uniform fractional matching]
\label{lem:matching-LP}
Every \(P\)-class contains exactly \(R\) cells, all of the same cardinality.
In particular, \(R=n/\alpha\) is an integer.
\end{lemma}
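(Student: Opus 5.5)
The plan is to exhibit the uniform fractional matching on the cell incidence graph $\mathcal B$ of Definition~\ref{def:incidence-graph}, show that it is optimal, and read off the conclusion from complementary slackness with an optimal dual solution. Put $x_f:=1/R=e$ for every edge $f$ of $\mathcal B$.

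\textbf{Feasibility.} Let $r_i$ denote the number of cells of the $P$-class $\mathcal C_i$.
\begin{enumerate}[label=\textup{(\alph*)}]
\item A left vertex $u_i$ has degree $r_i$, so its load is $r_i/R\le 1$ by Lemma~\ref{lem:inside-Pclass}(iv).
\item A right vertex has degree at most $2$, so its load is at most $2/R<1$, since $R>2$ by \eqref{eq:Rdef}.
\end{enumerate}

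\textbf{Optimality.} Every vertex of $G$ lies in exactly one cell. Hence the weight of $x$ is
\[
\sum_{\text{cells }C}|C|/R=n/R=\alpha .
\]
On the other hand, Lemma~\ref{lem:line-graph} and Proposition~\ref{prop:bipartite-matching-LP} give $\alpha=\nu_w(\mathcal B)=\nu_w^*(\mathcal B)$. So $x$ attains the optimum of \eqref{eq:fractional-matching-primal}. By strong duality there is a dual feasible $y$ for \eqref{eq:fractional-matching-dual} with $\sum_v y_v=\alpha$, and the pair $(x,y)$ therefore satisfies \eqref{eq:optimality-edge} and \eqref{eq:optimality-vertex}.

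\textbf{Complementary slackness.} The argument then runs in three steps.
\begin{enumerate}[label=\textup{(\arabic*)}]
\item Every right vertex $z$ has load strictly less than $1$. By \eqref{eq:optimality-vertex}, $y_z=0$ for all right vertices.
\item Every edge of $\mathcal B$ carries $x_f=1/R>0$. By \eqref{eq:optimality-edge}, for each cell $C$ in $\mathcal C_i$ with right endpoint $z$, we get $y_{u_i}=y_{u_i}+y_z=|C|$. Thus all cells of $\mathcal C_i$ have the same cardinality $c_i=y_{u_i}$.
\item Since $c_i\ge1>0$, condition \eqref{eq:optimality-vertex} forces the load at $u_i$ to equal $1$, that is, $r_i/R=1$. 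So every $P$-class has exactly $R$ cells, and $R$ is an integer because $r_i$ is.
\end{enumerate}
As a consistency check, $n=\sum_i r_ic_i=R\sum_i y_{u_i}=R\alpha$, as expected.

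\textbf{Main obstacle.} The only delicate point is the lower bound $\nu_w^*(\mathcal B)\le\alpha$, which makes the uniform vector optimal. This rests entirely on the integrality $\nu_w=\nu_w^*$ for bipartite graphs, combined with the identification $\alpha=\nu_w(\mathcal B)$ from Lemma~\ref{lem:line-graph}. One also needs the strict inequality $R>2$, so that right vertices have slack and their dual variables vanish. Everything else is bookkeeping.
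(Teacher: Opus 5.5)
Your proposal is correct and follows essentially the same route as the paper's proof. Both show that the uniform vector $x_f=1/R$ is feasible, using $r_i\le R$ and $R>2$, and that it is optimal because $\alpha=\nu_w(\mathcal B)=\nu_w^*(\mathcal B)$. Complementary slackness with an optimal dual $y$ then gives $y_z=0$ at right vertices, the common cell size $y_{u_i}$ within each $P$-class, and tightness $r_i=R$ at every left vertex.
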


\begin{proof}
For every edge \(f\in E(\mathcal B)\), set
\[
   x_f:=\frac1R.
\]
At a left vertex \(u_i\) corresponding to a \(P\)-class containing \(r_i\)
cells, Lemma~\ref{lem:inside-Pclass} gives
\[
   \sum_{f\ni u_i}x_f=\frac{r_i}{R}\le1.
\]
Every right vertex has degree at most two, and \(R>2\), so
\[
   \sum_{f\ni z}x_f\le\frac2R<1
\]
for every right vertex \(z\).  Thus \(x\) is a feasible fractional matching
of \(\mathcal B\).

Since the edges of \(\mathcal B\) correspond bijectively to the cells of
\(G\),
\[
   \sum_{f\in E(\mathcal B)}w_fx_f
   =
   \frac1R\sum_{f\in E(\mathcal B)}w_f
   =
   \frac nR
   =
   \alpha.
\]
On the other hand, Lemma~\ref{lem:line-graph} gives
\[
   \nu_w(\mathcal B)=\alpha.
\]
Because \(\mathcal B\) is bipartite, Proposition~\ref{prop:bipartite-matching-LP}
implies
\[
   \nu_w^*(\mathcal B)=\nu_w(\mathcal B)=\alpha.
\]
Hence \(x\) is an optimal fractional matching.

Let \(y\) be an optimal solution of the dual program
\eqref{eq:fractional-matching-dual}.  Since \(x_f=1/R>0\) for every edge
\(f=uv\), relation \eqref{eq:optimality-edge} gives
\[
   y_u+y_v=w_f
   \qquad(f=uv\in E(\mathcal B)).
\]
Every right-vertex constraint of the fractional matching \(x\) is strict.
Consequently, the contrapositive of \eqref{eq:optimality-vertex} gives
\[
   y_z=0
\]
at every right vertex \(z\).

It follows that, for every edge \(f=u_i z\),
\[
   w_f=y_{u_i}.
\]
Thus all edges incident with a fixed left vertex \(u_i\) have the same
weight.  Equivalently, all cells in a fixed \(P\)-class have the same
cardinality.

This common cardinality is positive, so \(y_{u_i}>0\).  Relation
\eqref{eq:optimality-vertex} therefore shows that the constraint at \(u_i\)
is an equality:
\[
   1=\sum_{f\ni u_i}x_f=\frac{r_i}{R}.
\]
Hence \(r_i=R\) for every \(P\)-class.  Since \(r_i\) is an integer, \(R\)
is an integer.
\end{proof}

\subsection{Cell-vector sums and the number of \texorpdfstring{$P$-classes}{P-classes}}

The cells from two $P$-classes are said to be paired by a perfect antipodal
matching if they can be paired bijectively so that each cell is matched with
the unique cell in the other $P$-class whose $q$-vector is its negative.

\begin{lemma}[Perfect matching between adjacent $P$-classes]
\label{lem:perfect-class-matching}
If two $P$-classes have at least one edge between them, then their cells are
paired by a perfect antipodal matching.
\end{lemma}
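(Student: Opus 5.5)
The plan is to use the exact value $R=n/\alpha$ from Lemma~\ref{lem:matching-LP}. It forces the $q$-vectors of each $P$-class to sum to zero, and then counting inner products across the two classes shows that every cell must have an antipodal partner.

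\emph{Step 1: the $q$-vectors of a $P$-class sum to zero.} Let $\mathcal C$ be a $P$-class with cells $C_1,\ldots,C_R$; by Lemma~\ref{lem:matching-LP} it has exactly $R$ cells. Let $x_1,\ldots,x_R$ be their $q$-vectors. By \eqref{eq:simplex-gram} their Gram matrix is $I_R-eJ_R$, so
\[
\Bigl\|\sum_i x_i\Bigr\|^2=\one^{\top}(I_R-eJ_R)\one=R-eR^2=0,
\]
because $eR=1$. Hence $\sum_i x_i=0$. The same holds for the $q$-vectors $y_1,\ldots,y_R$ of the cells of a second $P$-class $\mathcal D$.

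\emph{Step 2: the cross inner products take only two values.} Let $p_{\mathcal C}$ and $p_{\mathcal D}$ be the common $p$-vectors of the two classes, and put $c:=\langle p_{\mathcal C},p_{\mathcal D}\rangle$. Take $u$ in the cell of $x_i$ and $v$ in the cell of $y_j$. By Lemma~\ref{lem:cross-edges}, $uv$ is an edge exactly when $y_j=-x_i$.
\begin{itemize}
\item If $uv$ is a nonedge, then $A_{uv}=0$, so $P_{uv}=Q_{uv}$, that is, $\langle x_i,y_j\rangle=c$.
\item If $uv$ is an edge, it is of type (Q) by Lemma~\ref{lem:two-edge-types}, so $P_{uv}=e$ and $Q_{uv}=-d$. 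Thus $c=e$ and $\langle x_i,y_j\rangle=-d$.
\end{itemize}
The hypothesis says at least one edge exists between the classes, so $c=e$. Consequently $\langle x_i,y_j\rangle=e$ whenever $y_j\neq -x_i$, and $\langle x_i,y_j\rangle=-d$ when $y_j=-x_i$.

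\emph{Step 3: every cell has a partner.} Fix $i$ and sum over $j$; Step 1 gives $\sum_j\langle x_i,y_j\rangle=0$.
\begin{itemize}
\item If $x_i$ had no antipodal cell in $\mathcal D$, the sum would be $Re=1\neq0$, a contradiction.
\item If it has one, the partner is unique by Lemma~\ref{lem:cross-edges}, and the sum is $(R-1)e-d=1-e-d=0$, which is consistent.
\end{itemize}
So every cell of $\mathcal C$ has exactly one antipodal cell in $\mathcal D$, and by symmetry every cell of $\mathcal D$ has exactly one in $\mathcal C$. Uniqueness of antipodal partners makes this correspondence a bijection, i.e.\ a perfect antipodal matching.

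The only delicate point is Step 2: the value of $c$ must be pinned down using the existence of a single cross edge. This is exactly where the hypothesis enters; without it, $c$ could differ from $e$ and the counting in Step 3 would fail.
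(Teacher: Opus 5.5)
Your proof is correct and follows essentially the same route as the paper. In both, the $q$-vectors of a $P$-class sum to zero because their Gram matrix is $I_R-\frac1R J_R$, and a single cross edge pins the common $p$-inner product to $e$. An unpartnered cell then contradicts the zero sum via $Re=1\neq 0$. The only cosmetic difference is that you get bijectivity by running the argument in both directions, whereas the paper runs it once and concludes by counting $R$ cells on each side with unique partners.
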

\begin{proof}
Let the common $q$-vectors of the cells in the first $P$-class be
$x_1,\ldots,x_R$.  By Lemmas~\ref{lem:inside-Pclass} and~\ref{lem:matching-LP}, the Gram matrix of \(x_1,\ldots,x_R\) is
\[
\Gamma
=
\bigl(\langle x_i,x_j\rangle\bigr)_{i,j=1}^R
=
I_R-\frac1R J_R.
\]
Let \(\mathbf 1_R\) denote the all-ones vector in \(\mathbb R^R\).
Then
\[
\begin{aligned}
\left\|x_1+\cdots+x_R\right\|^2
&=
\sum_{i=1}^R\sum_{j=1}^R
   \langle x_i,x_j\rangle\\
&=
\mathbf 1_R^{\mathsf T}\Gamma\mathbf 1_R\\
&=
\mathbf 1_R^{\mathsf T}
\left(I_R-\frac1R J_R\right)\mathbf 1_R\\
&=
R-\frac1R R^2\\
&=0.
\end{aligned}
\]
Consequently,
\begin{equation}\label{eq:sumzero}
x_1+\cdots+x_R=0.
\end{equation}

Suppose the second $P$-class has a cell with $q$-vector $y$.  Since the two
$P$-classes are adjacent, some cross-edge is present.  By Lemma~\ref{lem:two-edge-types},
the common inner product of their $p$-vectors is $e$.  Thus, for every cross
pair that is a nonedge, the corresponding $q$-inner product is also $e$.

If $y$ were not antipodal to any $x_i$, then
Lemma~\ref{lem:cross-edges} would imply that every cross-pair involving $y$ is
a nonedge.  Hence
\[
   \langle x_i,y\rangle=e
   \qquad(i=1,\ldots,R).
\]
Taking the inner product of \eqref{eq:sumzero} with $y$ gives
\[
   0=Re=1,
\]
a contradiction.  Thus every cell of the second $P$-class is antipodal to a
cell of the first.  Both classes have exactly $R$ cells, and antipodal partners
are unique by Lemma~\ref{lem:cross-edges}; therefore the matching is perfect.
\end{proof}

\begin{proposition}[At most two $P$-classes]\label{prop:at-most-two}
The graph $G$ has either one $P$-class or two $P$-classes.
\end{proposition}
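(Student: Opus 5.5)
The plan is to view the $P$-classes as vertices of an auxiliary ``class graph'', joining two $P$-classes when $G$ has at least one edge between them. The argument then has two parts: the class graph is connected, and each of its vertices has degree at most one. A connected graph with maximum degree at most one has at most two vertices, which is the claim.

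\emph{Step 1: the class graph is connected.} Suppose the $P$-classes split into two nonempty families with no edge of $G$ between a class of the first family and a class of the second. Every edge of $G$ either lies inside a $P$-class (type (P)) or joins two $P$-classes. Hence the union of the vertex sets of the first family would be a union of connected components of $G$, and so would the second. This contradicts the standing assumption of the section that $G$ is connected.

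\emph{Step 2: each $P$-class is adjacent to at most one other $P$-class.} Suppose a $P$-class $\mathcal C$ has at least one edge to each of two distinct $P$-classes $\mathcal C'$ and $\mathcal C''$. Pick any cell $C$ of $\mathcal C$.
\begin{itemize}
\item Lemma~\ref{lem:perfect-class-matching} applied to $\mathcal C,\mathcal C'$ gives a cell $D'\in\mathcal C'$ whose $q$-vector is the negative of that of $C$.
\item The same lemma applied to $\mathcal C,\mathcal C''$ gives a cell $D''\in\mathcal C''$ with the same property.
\end{itemize}
Here $D'\ne D''$, because distinct $P$-classes are disjoint and cells are subsets of their $P$-class. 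This contradicts the uniqueness of the antipodal cell in the whole graph (the last assertion of Lemma~\ref{lem:cross-edges}). Note that Lemma~\ref{lem:matching-LP} guarantees every $P$-class has $R\ge 3$ cells, so a cell $C$ exists and the argument is not vacuous.

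Combining the two steps, the class graph is connected with maximum degree at most one, so it is $K_1$ or $K_2$. Hence $G$ has one or two $P$-classes.

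I expect no serious obstacle. The only point needing care is Step 1: one must check that edges between vertices of different $P$-classes are exactly the cross-class edges of Lemma~\ref{lem:cross-edges}. Thus a partition of the classes with no such edges really disconnects $G$.
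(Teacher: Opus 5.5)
Your proposal is correct and follows the paper's proof essentially step for step. The paper also builds the auxiliary graph on $P$-classes, gets its connectedness from that of $G$, and bounds its maximum degree by one by combining the perfect antipodal matching of Lemma~\ref{lem:perfect-class-matching} with the uniqueness of antipodal cells from Lemma~\ref{lem:cross-edges}.
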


\begin{proof}
Form an auxiliary graph whose vertices are the $P$-classes, with two classes
adjacent when there is an edge of $G$ between them.  By
Lemma~\ref{lem:perfect-class-matching}, if one $P$-class is adjacent to another,
then every one of its cells already has its unique antipodal partner in that
other class.  Lemma~\ref{lem:cross-edges} then prevents adjacency to any third
$P$-class.  Hence the auxiliary graph has maximum degree at most one.

Because $G$ is connected, the auxiliary graph is connected.  A connected graph
of maximum degree at most one has one or two vertices.  Thus $G$ has one or two
$P$-classes.
\end{proof}

\section{Identification of the connected equality graphs}

\begin{proposition}\label{prop:identify-strict}
Assume $\alpha(G)<n/2$.  Then either
\begin{enumerate}[label=\textup{(\roman*)}]
\item $G\cong K_{a,a,\ldots,a}$ with $R=n/\alpha$ parts, or
\item $G\cong H_R(a,b)$ for some positive integers $a,b$.
\end{enumerate}
\end{proposition}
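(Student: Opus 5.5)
We split according to Proposition~\ref{prop:at-most-two}: $G$ has either one $P$-class or two $P$-classes. Throughout we use Lemma~\ref{lem:matching-LP}, which says every $P$-class consists of exactly $R$ cells, all of the same cardinality.

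\emph{One $P$-class.} Here all vertices lie in a single $P$-class with cells $C_1,\ldots,C_R$ of common size $a$. By Lemma~\ref{lem:inside-Pclass}, each cell is independent and any two distinct cells are completely joined. Since these cells exhaust $V(G)$, this already gives $G\cong K_{a,\ldots,a}$ with $R$ parts, which is alternative (i). As a consistency check, $n=Ra$ and $\alpha=a$, so $R=n/\alpha$ holds.

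\emph{Two $P$-classes.} Let the classes be $\mathcal C$ and $\mathcal D$. Their cells $X_1,\ldots,X_R$ have common size $a$, and $Y_1,\ldots,Y_R$ have common size $b$. Because $G$ is connected, some edge joins the two classes. Lemma~\ref{lem:perfect-class-matching} then gives a perfect antipodal matching, and after relabelling we may take $q_{Y_i}=-q_{X_i}$ for every $i$. We now check the four adjacency rules of Definition~\ref{def:Hrab}:
\begin{enumerate}[label=\textup{(\roman*)}]
\item Each $X_i$ and each $Y_i$ is independent, and the $X_i$ are pairwise completely joined, as are the $Y_i$, by Lemma~\ref{lem:inside-Pclass}.
\item For cross pairs, Lemma~\ref{lem:cross-edges} says $x\in X_i$ and $y\in Y_j$ are adjacent iff $q_y=-q_x$.
\item This holds exactly when $i=j$, since antipodal partners are unique and $q_{Y_j}=-q_{X_j}\neq -q_{X_i}$ for $j\ne i$. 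Distinct cells of one class have distinct $q$-vectors, because they differ in $q$ while sharing $p$.
\end{enumerate}
Hence $G\cong H_R(a,b)$, which is alternative (ii). Since $R>2$ is an integer, we have $R\ge3$.

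The only point needing care is that every vertex lies in some cell and that the labelling of cells is consistent. Both follow from the definitions: cells partition each $P$-class, and $P$-classes partition $V(G)$. I do not expect a serious obstacle, since the preceding lemmas carry the real content.
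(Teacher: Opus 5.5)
Your proposal is correct and follows the paper's proof essentially step for step. You split via Proposition~\ref{prop:at-most-two}, use Lemmas~\ref{lem:matching-LP} and~\ref{lem:inside-Pclass} for a single $P$-class, and use Lemmas~\ref{lem:perfect-class-matching} and~\ref{lem:cross-edges} to identify $H_R(a,b)$ when there are two. Your explicit appeal to connectedness to produce a cross-edge, and to distinctness of $q$-vectors within a class to exclude edges between $X_i$ and $Y_j$ for $i\ne j$, just spells out details the paper leaves implicit.
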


\begin{proof}
By Proposition~\ref{prop:at-most-two}, there are one or two $P$-classes.

If there is one $P$-class, Lemma~\ref{lem:matching-LP} says that it contains
exactly $R$ cells, all of the same size, say $a$.  By
Lemma~\ref{lem:inside-Pclass}, each cell is independent and every two distinct
cells are completely joined.  Thus
\[
   G\cong K_{a,a,\ldots,a}
\]
with $R$ parts.

If there are two $P$-classes, each contains exactly $R$ equal-sized cells.
Let their respective cell sizes be $a$ and $b$.  Each $P$-class induces a
balanced complete $R$-partite graph.  By
Lemma~\ref{lem:perfect-class-matching}, their cells are paired bijectively;
each paired pair of cells is completely joined, and there are no other
cross-edges.  This is exactly the graph $H_R(a,b)$ of
Definition~\ref{def:Hrab}.
\end{proof}

\section{Spectra of the extremal families}

We now verify that all graphs listed in Theorem~\ref{thm:main} attain equality.
For balanced complete multipartite graphs, we use their standard adjacency
spectrum; for the family $H_r(a,b)$, we give a direct calculation.

\begin{lemma}[Balanced complete multipartite graphs]
\label{lem:spectrum-balanced-multipartite}
For $r\ge2$ and $a\ge1$, using a superscript $[t]$ to denote
multiplicity $t$,
\[
 \Spec\bigl(K_{a,a,\ldots,a}\bigr)
 =\bigl\{a(r-1),\,(-a)^{[r-1]},\,0^{[r(a-1)]}\bigr\}.
\]
Consequently,
\[
   \E\bigl(K_{a,a,\ldots,a}\bigr)=2a(r-1)
   =2\bigl(ra-a\bigr).
\]
\end{lemma}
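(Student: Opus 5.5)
The plan is to write the adjacency matrix in block form and read off the spectrum directly. Order the vertices of $K_{a,a,\ldots,a}$ (with $r$ parts) part by part. Then
\[
   A\bigl(K_{a,a,\ldots,a}\bigr)=(J_r-I_r)\otimes J_a ,
\]
because two vertices are adjacent exactly when they lie in different parts. The eigenvalues of a Kronecker product are the pairwise products of the eigenvalues of the factors.

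The factors are handled separately:
\begin{itemize}
\item $J_r-I_r$ has eigenvalues $r-1$ (eigenvector $\one$) and $-1$ with multiplicity $r-1$ (vectors orthogonal to $\one$).
\item $J_a$ has eigenvalues $a$ (multiplicity one) and $0$ with multiplicity $a-1$.
\end{itemize}
Multiplying gives:
\begin{itemize}
\item $a(r-1)$ once;
\item $-a$ with multiplicity $r-1$;
\item $0$ with multiplicity $r(a-1)$, coming from both $(r-1)(a-1)$ and $1\cdot(a-1)$.
\end{itemize}
The multiplicities sum to $1+(r-1)+r(a-1)=ra$, which is the order of the graph, so this is the full spectrum.

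For readers who prefer to avoid tensor products, the same spectrum can be obtained by exhibiting eigenvectors. Vectors supported in one part with coordinate sum zero lie in the kernel, and there are $r(a-1)$ independent ones. Vectors that are constant on each part, with part values summing to zero, have eigenvalue $-a$, giving $r-1$ of them. Finally, $\one$ has eigenvalue $a(r-1)$.

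The energy then follows immediately:
\[
   \E=a(r-1)+(r-1)a=2a(r-1)=2(ra-a).
\]
It is worth recording, for use in the verification step, that $\alpha(K_{a,\ldots,a})=a$ when $r\ge2$: any independent set lies inside a single part. Hence the energy equals $2(n-\alpha)$ with $n=ra$.

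There is no real obstacle. The only point needing care is the multiplicity bookkeeping for the eigenvalue $0$, together with the check that the listed multiplicities exhaust all $ra$ eigenvalues.
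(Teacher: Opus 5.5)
Your proof is correct. It is more self-contained than the paper's, which does not derive the spectrum at all: it cites it as standard (Brouwer and Haemers) and then does the same energy and independence-number bookkeeping you record at the end. Your Kronecker factorization $A=(J_r-I_r)\otimes J_a$ yields the full spectrum, including the zero multiplicity $(a-1)+(r-1)(a-1)=r(a-1)$, in a couple of lines. Your eigenvector variant is the same splitting into the cell-constant subspace and its orthogonal complement that the paper uses for $H_r(a,b)$ in Lemma~\ref{lem:spectrum-H}. So your version costs a few lines but removes the external dependence and makes the two verification lemmas use the same method. The fact $\alpha(K_{a,\ldots,a})=a$ for $r\ge 2$ is needed to read the energy as $2(n-\alpha)$, and the paper asserts it without comment. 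Your one-line justification, that distinct parts are completely joined, is enough.
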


\begin{proof}
The displayed spectrum is standard; see, for example,
\cite{BrouwerHaemers}.  Since the graph has order $ra$ and independence
number $a$, the energy formula gives
\[
   \E\bigl(K_{a,a,\ldots,a}\bigr)
   =2a(r-1)
   =2\bigl(|V(K_{a,a,\ldots,a})|-\alpha(K_{a,a,\ldots,a})\bigr).
\]
\end{proof}

\begin{lemma}[Spectrum of $H_r(a,b)$]\label{lem:spectrum-H}
Let $r\ge3$ and $a,b\ge1$.  Define
\[
 \mu_{\pm}
 :=\frac{(r-1)(a+b)
 \pm\sqrt{(r-1)^2(a-b)^2+4ab}}{2}.
\]
Then $\mu_+>\mu_->0$ and
\begin{equation}\label{eq:spectrum-H}
 \Spec\bigl(H_r(a,b)\bigr)
 =\bigl\{\mu_+,\mu_-,\,(-(a+b))^{[r-1]},\,
 0^{[r(a+b)-r-1]}\bigr\}.
\end{equation}
Moreover,
\[
   \alpha\bigl(H_r(a,b)\bigr)=a+b
\]
and
\[
   \E\bigl(H_r(a,b)\bigr)
   =2(r-1)(a+b)
   =2\bigl(r(a+b)-(a+b)\bigr).
\]
\end{lemma}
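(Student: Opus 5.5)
We compute the spectrum of $H_r(a,b)$ through an equitable partition, and then find its independence number by a direct combinatorial argument.

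\textbf{The quotient matrix.} Write the adjacency matrix in block form with respect to the $2r$ parts $X_1,\ldots,X_r,Y_1,\ldots,Y_r$. Every vertex of $X_i$ has $a$ neighbours in each $X_j$ ($j\ne i$) and $b$ neighbours in $Y_i$. Every vertex of $Y_i$ has $b$ neighbours in each $Y_j$ ($j\ne i$) and $a$ neighbours in $X_i$. So the partition is equitable, with quotient matrix
\[
   M=\begin{pmatrix} a(J_r-I_r) & bI_r\\ aI_r & b(J_r-I_r)\end{pmatrix}.
\]

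\textbf{The zero eigenvalues.} Any vector supported on a single part and summing to zero there lies in the kernel of $A$, because vertices in the same part have identical neighbourhoods. These vectors give $r(a-1)+r(b-1)=r(a+b)-2r$ independent kernel vectors. The remaining $2r$ eigenvalues of $A$ are exactly the eigenvalues of $M$.

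\textbf{Eigenvalues of $M$.} We split $\mathbb R^{2r}$ into $\mathbf 1_r$-components and their orthogonal complements.
\begin{itemize}
\item On vectors $(s\mathbf 1_r,t\mathbf 1_r)$, $M$ acts as the $2\times2$ matrix $\begin{pmatrix}a(r-1)&b\\ a&b(r-1)\end{pmatrix}$. Its trace is $(r-1)(a+b)$ and its determinant is $ab\bigl((r-1)^2-1\bigr)=ab\,r(r-2)>0$ for $r\ge3$. Its eigenvalues are $\mu_\pm$, and the discriminant simplifies to $(r-1)^2(a-b)^2+4ab$. Both eigenvalues are positive because the trace and determinant are positive, and they are distinct because the discriminant is positive.
\item On vectors $(u,v)$ with $u,v\perp\mathbf 1_r$, $M$ acts as $\begin{pmatrix}-a&b\\ a&-b\end{pmatrix}\otimes I_{r-1}$. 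The eigenvalues of this $2\times2$ matrix are $0$ and $-(a+b)$, each occurring with multiplicity $r-1$.
\end{itemize}
Adding the $r-1$ new zeros to the kernel found above gives $r(a+b)-r-1$ zeros in total. This establishes \eqref{eq:spectrum-H}, and the energy is
\[
   \mu_++\mu_-+(r-1)(a+b)=2(r-1)(a+b).
\]

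\textbf{Independence number.} The set $X_1\cup Y_2$ is independent and has size $a+b$, so $\alpha\ge a+b$.

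For the upper bound, an independent set meets at most one $X_i$ and at most one $Y_j$. If it meets both $X_i$ and $Y_j$, then $i\ne j$, since $X_i$ is joined to $Y_i$. In every case its size is at most $a+b$.

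Since $n=r(a+b)$, this gives $\E=2(n-\alpha)$.

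The only delicate step is the bookkeeping of multiplicities and the sign claim $\mu_->0$, and that claim needs $r\ge3$.
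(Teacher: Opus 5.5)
Your proof is correct and follows the paper's argument essentially step for step. It uses the same equitable partition, the same $r(a+b-2)$ kernel vectors from within-cell zero-sum vectors, the same splitting into $\mathbf{1}$-components and $\mathbf{1}^\perp$ with the trace/determinant check for $\mu_->0$, and the same independence-number argument. The only difference is cosmetic: you work with the non-symmetric quotient matrix directly instead of its symmetrization $D^{1/2}QD^{-1/2}$, which is harmless because only its characteristic polynomial is needed.
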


\begin{proof}
For any distinct indices $i$ and $j$, the set $X_i\cup Y_j$ is
independent and has cardinality $a+b$.  On the other hand, since
distinct $X$-cells are mutually complete and distinct $Y$-cells are
mutually complete, an independent set can meet at most one $X$-cell
and at most one $Y$-cell.  Moreover, it cannot meet both $X_i$ and
$Y_i$.  Therefore
\[
   \alpha(H_r(a,b))=a+b.
\]

We next determine the adjacency spectrum.  Consider the equitable
partition
\[
   X_1,\ldots,X_r,Y_1,\ldots,Y_r.
\]
Its quotient matrix is
\[
 Q=
 \begin{pmatrix}
   a(J_r-I_r) & bI_r\\
   aI_r & b(J_r-I_r)
 \end{pmatrix}.
\]
Let
\[
   D=\operatorname{diag}(aI_r,bI_r).
\]
Although $Q$ need not be symmetric, it is similar to the symmetric
normalized quotient matrix
\[
 M=D^{1/2}QD^{-1/2}
 =
 \begin{pmatrix}
   a(J_r-I_r) & \sqrt{ab}\,I_r\\
   \sqrt{ab}\,I_r & b(J_r-I_r)
 \end{pmatrix}.
\]
Hence $Q$ and $M$ have the same eigenvalues.

The orthogonal complement of the cell-constant subspace consists of
vectors whose coordinates sum to zero on every cell.  Since every cell
is independent and all vertices in a fixed cell have identical
neighborhoods outside that cell, the adjacency matrix annihilates this
subspace.  Its dimension is
\[
   r(a-1)+r(b-1)=r(a+b-2),
\]
so it contributes $r(a+b-2)$ zero eigenvalues.  It remains to determine
the eigenvalues of $M$.

Decompose
\[
   \mathbb R^r=\operatorname{span}\{\one\}\oplus\one^\perp,
\]
where $\one$ is the all-ones vector in $\mathbb R^r$.  Since
\[
   (J_r-I_r)\one=(r-1)\one
   \quad\text{and}\quad
   (J_r-I_r)z=-z
   \quad(z\in\one^\perp),
\]
the two summands can be treated separately.

On the two-dimensional subspace generated by $(\one,0)$ and
$(0,\one)$, the matrix $M$ is represented by
\[
 B_0=
 \begin{pmatrix}
   a(r-1) & \sqrt{ab}\\
   \sqrt{ab} & b(r-1)
 \end{pmatrix}.
\]
Its eigenvalues are
\[
 \mu_\pm=
 \frac{(r-1)(a+b)\pm
 \sqrt{(r-1)^2(a-b)^2+4ab}}{2}.
\]
Moreover,
\[
   \operatorname{tr}(B_0)=(r-1)(a+b)>0
\]
and, since $r\ge 3$,
\[
   \det(B_0)
   =ab\bigl((r-1)^2-1\bigr)
   =abr(r-2)>0.
\]
Thus both $\mu_+$ and $\mu_-$ are positive.

Now let $z\in\one^\perp$.  Since $J_rz=0$, we have
$(J_r-I_r)z=-z$.
Therefore the subspace generated by $(z,0)$ and $(0,z)$ is invariant
under $M$, and the restriction of $M$ to this subspace is represented by
\[
 B_1=
 \begin{pmatrix}
   -a & \sqrt{ab}\\
   \sqrt{ab} & -b
 \end{pmatrix}.
\]
Its characteristic polynomial is
\[
   \det(\lambda I_2-B_1)
   =(\lambda+a)(\lambda+b)-ab
   =\lambda(\lambda+a+b).
\]
Hence its eigenvalues are $0$ and $-(a+b)$.  Since
$\dim\one^\perp=r-1$, each of these eigenvalues has multiplicity $r-1$.

Combining the eigenvalues of $M$ with those arising from the
orthogonal complement of the cell-constant subspace gives
\[
 \Spec(H_r(a,b))
 =
 \left\{
   \mu_+,\,
   \mu_-,\,
   \bigl(-(a+b)\bigr)^{[r-1]},\,
   0^{[r(a+b-1)-1]}
 \right\},
\]
which proves \eqref{eq:spectrum-H}.

Finally,
\[
   \mu_++\mu_-=(r-1)(a+b).
\]
Therefore
\begin{align*}
   \E(H_r(a,b))
   &=
   \mu_++\mu_-+(r-1)(a+b)\\
   &=2(r-1)(a+b).
\end{align*}
Since $|V(H_r(a,b))|=r(a+b)$ and
$\alpha(H_r(a,b))=a+b$, this may also be written as
\[
   \E(H_r(a,b))
   =2\bigl(|V(H_r(a,b))|-\alpha(H_r(a,b))\bigr).
\]
\end{proof}

\section{Proof of the main theorem}

\begin{proof}[Proof of Theorem~\ref{thm:main}]
First suppose that $G$ is connected.

If $G$ has one vertex, it is an isolated vertex and equality is immediate.  If
$G$ is nontrivial and $\alpha(G)=n/2$, Proposition~\ref{prop:half} gives
$G\cong K_{n/2,n/2}$, which is a balanced complete multipartite graph.  If
$\alpha(G)<n/2$, Proposition~\ref{prop:identify-strict} gives either a balanced
complete multipartite graph or a graph $H_r(a,b)$ with $r=R>2$, hence
$r\ge3$.  This proves necessity for connected graphs.

Conversely, an isolated vertex satisfies equality trivially.
Lemma~\ref{lem:spectrum-balanced-multipartite} proves equality for balanced
complete multipartite graphs, and Lemma~\ref{lem:spectrum-H} proves equality
for $H_r(a,b)$.

Finally, let
\[
   G=G_1\dot\cup\cdots\dot\cup G_k
\]
be the decomposition into connected components.  Both energy and independence
number are additive over components:
\[
   \E(G)=\sum_{i=1}^k\E(G_i),
   \qquad
   \alpha(G)=\sum_{i=1}^k\alpha(G_i).
\]
Thus, if every component is listed in the theorem, then
\[
 \E(G)
 =\sum_i2\bigl(|V(G_i)|-\alpha(G_i)\bigr)
 =2\bigl(n-\alpha(G)\bigr).
\]
Conversely, if equality holds for $G$, then
\[
 0
 =\E(G)-2(n-\alpha(G))
 =\sum_i\left[
   \E(G_i)-2\bigl(|V(G_i)|-\alpha(G_i)\bigr)
 \right].
\]
Every summand is nonnegative by \eqref{eq:KP-main}, so every summand is zero.
Hence every connected component is an equality graph and therefore belongs to
one of the listed families.
\end{proof}

\section{Further consequences}

\begin{corollary}\label{cor:integer-ratio}
If $G$ is a connected equality graph with $\alpha(G)<|V(G)|/2$, then
$|V(G)|/\alpha(G)$ is an integer.
\end{corollary}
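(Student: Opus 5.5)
This corollary is essentially a restatement of Lemma~\ref{lem:matching-LP}, so the plan is to check that its hypotheses hold and then quote it. Let $G$ be connected with $\E(G)=2\bigl(n-\alpha(G)\bigr)$ and $\alpha(G)<n/2$, where $n=|V(G)|$.

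First, $G$ is nontrivial. A single vertex has $\alpha=1>1/2=n/2$, so it violates the hypothesis $\alpha(G)<n/2$. Hence $G$ is connected, nontrivial, satisfies equality in \eqref{eq:KP-main}, and has $\alpha<n/2$. These are exactly the standing assumptions of Section~5.

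We may therefore use the Gram representations, the $P$-classes and cells, and the weighted incidence graph $\mathcal B$ built there. Lemma~\ref{lem:matching-LP} then states that every $P$-class contains exactly $R=n/\alpha$ cells. The number of cells in a class is a positive integer, and $G$ has at least one $P$-class because it is nonempty. It follows that $R=n/\alpha(G)$ is an integer.

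Alternatively, one can read the conclusion off Proposition~\ref{prop:identify-strict}. It gives $G\cong K_{a,\ldots,a}$ with $R$ parts, or $G\cong H_R(a,b)$. In the first case $n=Ra$ and $\alpha=a$. In the second case $n=R(a+b)$ and $\alpha=a+b$ by Lemma~\ref{lem:spectrum-H}. Either way $n/\alpha=R$, the number of parts, which is an integer.

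There is no real obstacle here. The only points needing care are excluding the trivial graph and recalling that $R$ was defined as $n/\alpha$ in \eqref{eq:Rdef}.
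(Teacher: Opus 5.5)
Your proposal is correct and follows the paper's proof, which likewise just cites Lemma~\ref{lem:matching-LP}. Your check that $G$ is nontrivial, so that the standing hypotheses of Section~5 apply, is a detail the paper leaves implicit. The alternative route through Proposition~\ref{prop:identify-strict} is also valid, but it depends on the same lemma.
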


\begin{proof}
This is contained in Lemma~\ref{lem:matching-LP}.
\end{proof}

\begin{corollary}\label{cor:bipartite}
A bipartite graph $G$ satisfies
\[
   \E(G)=2\bigl(|V(G)|-\alpha(G)\bigr)
\]
if and only if every nontrivial connected component of $G$ is a balanced
complete bipartite graph.
\end{corollary}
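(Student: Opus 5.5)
The plan is to derive Corollary~\ref{cor:bipartite} from Theorem~\ref{thm:main} by deciding which members of the three families are bipartite. A graph is bipartite exactly when each of its components is, and energy, order and independence number are all additive over components. So, just as in the final step of the proof of Theorem~\ref{thm:main}, the problem reduces componentwise to a list of admissible connected graphs.

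For necessity, assume $G$ is bipartite and attains equality. By Theorem~\ref{thm:main}, every component is an isolated vertex, a balanced complete multipartite graph $K_{a,\ldots,a}$ with $r\ge2$ parts, or some $H_r(a,b)$ with $r\ge3$. If $r\ge3$, then picking one vertex from each of three distinct parts of $K_{a,\ldots,a}$ gives a triangle, so a bipartite component of the second kind has exactly two parts and is $K_{a,a}$. For $H_r(a,b)$ with $r\ge3$, vertices chosen in $X_1,X_2,X_3$ are pairwise adjacent by clause~(i) of Definition~\ref{def:Hrab}, which again gives a triangle. Components of the third kind are therefore excluded, and every nontrivial component is a balanced complete bipartite graph.

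For sufficiency, assume every nontrivial component is some $K_{a,a}$. Each such component lies in family~(2) with $r=2$, and the isolated vertices lie in family~(1). Theorem~\ref{thm:main}, or directly Lemma~\ref{lem:spectrum-balanced-multipartite} together with additivity, then gives equality. Such a graph is clearly bipartite, which the statement presupposes in any case.

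No step here is a real obstacle. The only point needing care is the triangle argument excluding $H_r(a,b)$: its correctness depends on the condition $r\ge3$ in family~(3), while the case $r=2$ is harmless because Remark~\ref{rem:Hrab-blowup} gives $H_2(a,b)\cong K_{a+b,a+b}$.
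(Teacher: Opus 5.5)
Your proposal is correct and follows the paper's proof: apply Theorem~\ref{thm:main}, then use triangles to exclude $H_r(a,b)$ with $r\ge3$ and balanced complete multipartite graphs with at least three parts. This leaves only isolated vertices and $K_{a,a}$. Your sufficiency direction and the componentwise additivity are handled exactly as in the paper.
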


\begin{proof}
The graphs $H_r(a,b)$ with $r\ge3$ contain triangles inside each of their two
complete multipartite halves, and a balanced complete multipartite graph is
bipartite only when it has exactly two parts.  The result follows from
Theorem~\ref{thm:main}.
\end{proof}

\end{document}